\newcommand{\cA}{{\cal A}}
\newcommand{\calM}{{\cal M}}
\DeclareMathOperator{\diam}{diam}
\newtheorem{theorem}{Theorem} %[section]
\newtheorem*{theorem*}{Theorem}
\newtheorem{claim}{Claim}
\theoremstyle{definition}
\title{On the Size of Finite Sidon Sets}
\author{Kevin O'Bryant \\ City University of New York, The Graduate Center and the College of Staten Island \\ \href{mailto:kevin.obryant@csi.cuny.edu}{kevin.obryant@csi.cuny.edu}}
\date{\today}
\begin{document}
\maketitle
\begin{abstract}
A Sidon set (also called a Golomb Ruler, a $B_2$ sequence, and a $1$-thin set) is a set of integers containing no nontrivial solutions to the equation $a+b=c+d$. We improve on the lower bound on the diameter of a Sidon set with $k$ elements: if $k$ is sufficiently large and $\cA$ is a Sidon set with $k$ elements, then $\diam(\cA)\ge k^2-1.99405 k^{3/2}$. Alternatively, if $n$ is sufficiently large, then the largest subset of $\{1,2,\dots,n\}$ that is a Sidon set has cardinality at most $n^{1/2}+0.99703 n^{1/4}$. While these are slight numerical improvements on Balogh-F\"uredi-Roy (arXiv:2103:15850v2), we use a method that is logically simpler than theirs but more involved computationally.
\end{abstract}

\section{Introduction}
A Sidon set~\cite{1932.Sidon} is a set $\cA$ of integers with no
solutions to $a_1+a_2=a_3+a_4,a_i\in \cA$, aside from the trivial
$\{a_1,a_2\}=\{a_3,a_4\}$. Equivalently, $\cA$ is a Sidon set if the
coefficients of $(\sum_{a\in \cA} z^a)^2$ are bounded by 2. It is also
equivalent, and more useful in this particular work, that $\cA$ is a Sidon set if there are no solutions to $a_1-a_2=a_3-a_4$ with $a_1 \neq a_2$ and $a_1 \neq a_3$. We refer our readers to~\cite{mybib} for a survey of Sidon sets and an extensive bibliography. Sidon sets are also known as $B_2$ sets, Golomb rulers, and 1-thin sets.

Throughout this work, $\cA=\{a_1<a_2<\dots<a_k\}$ is a finite Sidon set of integers. We seek lower bounds on the diameter $\diam(\cA):=a_k-a_1$. For a given $k$, the minimum possible value of $\diam(\cA)$ is denoted $s_k$. We let $R_2(n)$ be the maximum value of $k$ with $s_k\le n-1$, i.e., the maximum possible size of a Sidon set contained in $\{1,2,\dots,n\}$.

In 1938, Singer~\cite{1938.Singer} constructed a Sidon set with $q+1$ elements and diameter $q^2+q+1$, where $q$ is any prime power, whence $s_k \le k^2$ for infinitely many $k$. Alternatively, $R_2(n)> \sqrt{n}$ for infinitely many $n$. Using results on the distribution of primes (assuming RH), this construction gives $s_k \le k^2+k^{3/2+\epsilon}, R_2(n)\le \sqrt{n}+n^{1/4+\epsilon}$ for any $\epsilon>0$ and sufficiently large $k,n$. Alternative constructions that have the same asymptotic implications for $s_k,R_2(n)$ have been given by Bose~\cite{1942.Bose} and Ruzsa~\cite{Ruzsa}. Recently, Eberhard and Manners~\cite{Eberhard} have given a unified treatment that has these three constructions---and no others that are asymptotically as good---as special cases.

In 1941, Erd\H{o}s-Tur\'an~\cite{ET} use a ``windowing'' argument to derive the inequality (valid for any positive integers $n,T$)
\begin{equation}\label{eq:ETinequality} n-1 \ge \frac{R_2(n)^2 T}{T+R_2(n)-1}-T,\end{equation}
and from this inequality find that $R_2(n)=n^{1/2}+O(n^{1/4})$. In subsequent years, analysis of this inequality was improved and finally it was noted~\cite{Cilleruelo} that $R_2(n)=n^{1/2}+n^{1/4}+\frac12$ follows from~\eqref{eq:ETinequality}. In 1969, Lindstr\"om gave another proof, with a different inequality, which also gives this bound on $R_2(n)$. In~\ref{sec:ET} below, we give an even-more-elementary form of the Erd\H{o}s-Tur\'an argument and a ``book'' analysis of~\ref{eq:ETinequality} to give the sharpest expicit bound on $s_k$ that explicitly appears in the literature.
\begin{theorem}
    For $k\ge1$, $s_k \geq k^2 - 2k^{3/2}+k+\sqrt k-1.$
\end{theorem}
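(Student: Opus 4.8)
The plan is to read the bound straight out of the Erd\H{o}s--Tur\'an inequality \eqref{eq:ETinequality}; no new combinatorics is needed, only a careful ``book'' analysis of that inequality. First I would record the routine translation between $s_k$ and $R_2$: if $\cA$ is any Sidon set with $k$ elements, put $n=\diam(\cA)+1$, so that a translate of $\cA$ lies in $\{1,\dots,n\}$ and hence $k\le R_2(n)$. Since $x\mapsto \frac{x^2 T}{T+x-1}-T$ is increasing for $x\ge 1$, $T\ge 1$ (its $x$-derivative is $\frac{Tx(2T+x-2)}{(T+x-1)^2}>0$), \eqref{eq:ETinequality} gives
\[
\diam(\cA)=n-1\ \ge\ \frac{R_2(n)^2T}{T+R_2(n)-1}-T\ \ge\ \frac{k^2T}{T+k-1}-T
\]
for every positive integer $T$. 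Applying this to an $\cA$ realizing the minimum diameter, it suffices to produce, for each $k$, one integer $T\ge 1$ with $\frac{k^2T}{T+k-1}-T\ge k^2-2k^{3/2}+k+\sqrt k-1$.

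For the optimization I substitute $u=T+k-1$ (so $u$ runs over integers $\ge k$), which rewrites the left-hand side as $k^2+k-1-\bigl(\frac{c}{u}+u\bigr)$ with $c:=k^2(k-1)$. By AM--GM the real minimum of $u\mapsto \frac cu+u$ is $2\sqrt c=2k\sqrt{k-1}$, attained at $u=\sqrt c=k\sqrt{k-1}$; the exact identity $\frac cu+u=2\sqrt c+\frac{(u-\sqrt c)^2}{u}$ lets me control the cost of taking $u$ integral. Choosing $u_0$ to be the nearest integer to $k\sqrt{k-1}$ — which is $\ge k$ as soon as $k\ge 2$, so $T=u_0-k+1$ is admissible — gives $\frac{c}{u_0}+u_0\le 2k\sqrt{k-1}+\frac{1}{4u_0}\le 2k\sqrt{k-1}+\frac{1}{4(k\sqrt{k-1}-1/2)}$ (using $|u_0-\sqrt c|\le\frac12$ and $u_0\ge k\sqrt{k-1}-\frac12$), whence
\[
s_k\ \ge\ k^2+k-1-2k\sqrt{k-1}-\frac{1}{4\bigl(k\sqrt{k-1}-\frac12\bigr)} .
\]

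It remains to see that the right-hand side is at least $k^2-2k^{3/2}+k+\sqrt k-1$, i.e.\ that $2k^{3/2}-\sqrt k-2k\sqrt{k-1}\ge \frac{1}{4(k\sqrt{k-1}-1/2)}$. The left side is exactly $\sqrt k\,(\sqrt k-\sqrt{k-1})^2=\frac{\sqrt k}{(\sqrt k+\sqrt{k-1})^2}$, positive of size $\asymp k^{-1/2}$, while the right side is $\asymp k^{-3/2}$; clearing denominators turns the claim into $2(2k-1)\sqrt{k(k-1)}\ge 2k-1+2\sqrt k$, which follows for $k\ge 2$ from $\sqrt{k(k-1)}\ge\sqrt k$ together with $4(k-1)\sqrt k\ge 2k-1$. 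The case $k=1$ is immediate, since then $s_1=0$ equals the claimed value.

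The only genuine friction is the integrality of $T$: the slack between the real optimum $2k\sqrt{k-1}$ and the target $2k^{3/2}-\sqrt k$ is only $\frac{\sqrt k}{(\sqrt k+\sqrt{k-1})^2}=\Theta(k^{-1/2})$, which for small $k$ is of the same order as the rounding error. So the nearest-integer estimate must be carried out with the sharp lower bound $u_0\ge k\sqrt{k-1}-\frac12$ (the cruder $u_0\ge k-1$ already fails at $k=2$), and the first few values of $k$ repay an explicit check; for large $k$ there is ample room. I would note, finally, that although the theorem is placed in the Erd\H{o}s--Tur\'an section, its proof uses nothing beyond \eqref{eq:ETinequality}, so there is no need to re-derive the windowing step for it.
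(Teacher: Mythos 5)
Your proof is correct but takes a genuinely different route than the paper's. Both arguments reduce to choosing a good positive integer $T$ in the Erd\H{o}s--Tur\'an inequality $\diam(\cA)\ge \frac{k^2T}{T+k-1}-T$, but the choices differ. The paper sets $T=k^{3/2}-k+\epsilon$ with $\epsilon\in(0,1]$ making $T$ an integer (equivalently, $T+k-1$ is the integer in $(k^{3/2}-1,k^{3/2}]$), and then exhibits the closed-form identity $\bigl(\frac{k^2T}{T+k-1}-T\bigr)-\bigl(k^2-2k^{3/2}+k+\sqrt k-1\bigr)=\frac{(1-\epsilon)(\sqrt k+\epsilon-1)}{T+k-1}$, whose nonnegativity is immediate. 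You instead substitute $u=T+k-1$, recognize the left side as $k^2+k-1-\bigl(\frac{k^2(k-1)}{u}+u\bigr)$, take $u_0$ nearest to the true AM--GM optimizer $k\sqrt{k-1}$, and budget the rounding loss against the slack $2k^{3/2}-\sqrt k-2k\sqrt{k-1}=\sqrt k\,(\sqrt k-\sqrt{k-1})^2$. Your choice of $T$ is the ``natural'' one (the genuine minimizer of the bound), and the identity $\frac cu+u=2\sqrt c+\frac{(u-\sqrt c)^2}{u}$ is a transparent way to control integrality; the paper's choice is ever-so-slightly suboptimal but is reverse-engineered so that the verification is a one-line polynomial factorization with no case analysis at all, and in particular never needs the $k=1$ edge case to be separated out or the somewhat delicate observation that the available slack is only $\Theta(k^{-1/2})$. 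Your argument is complete --- the monotonicity in $x$ of $\frac{x^2T}{T+x-1}-T$, the admissibility $u_0\ge k$ for $k\ge2$, and the final cleared inequality $2(2k-1)\sqrt{k(k-1)}\ge 2k-1+2\sqrt k$ all check out --- but it is worth noting how much tidier the paper's \emph{ad hoc} $T$ makes the bookkeeping.
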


In light of the results cited above, the main term in $\diam(\cA)$ is $k^2$, and the secondary term is between $k^{3/2+\epsilon}$ and $-2k^{3/2}$. Recently, Balogh, F\"uredi, and Roy~\cite{BFR} (hereafter referred to as BFR) improved the bound on the secondary term from $-2k^{3/2}$ to $-1.996k^{3/2}$. This was the first real improvement on the bound on $\diam(\cA)$ or $R_2(n)$ since 1941. The number ($1.996$) appearing in their work is not fully optimized, and the important aspect of their work is that the number can be improved to below $2$ by combining the Erd\H{o}s-Tur\'an and Lindstr\"om proofs in a nontrivial manner. In this work we further improve the secondary term. The numerical improvement (also not fully optimized) is not the important aspect of this work, but that it is possible to improve the number below $2$ by a minor refinement the Erd\H{o}s-Tur\'an argument and not using the Lindstr\"om argument at all. Our method is logically simpler than the BFR method, but computationally more involved.
\begin{theorem}\label{thm:Main}
    Suppose that $\cA$ is a finite Sidon set of integers. If $k$ is sufficiently large, then $\diam(\cA) \ge k^2 - 1.99405 k^{3/2}$. Also, if $n$ is sufficiently large, then $R_2(n)<n^{1/2}+0.99703n^{1/4}$.
\end{theorem}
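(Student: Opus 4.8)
\emph{Proof strategy.}
The two displayed bounds are equivalent to one another up to lower--order terms, so I would establish the bound on $\diam(\cA)$ and deduce the bound on $R_2(n)$ from it. Indeed, if $\cA\subseteq\{1,\dots,n\}$ is a Sidon set with $k=R_2(n)$ elements, then $n-1\ge\diam(\cA)\ge k^2-1.99405\,k^{3/2}$; since one also knows $k=n^{1/2}+O(n^{1/4})$, so that $k^{3/2}=n^{3/4}+O(n^{1/2})$, solving for $k$ gives $k<n^{1/2}+0.99703\,n^{1/4}$ for all large $n$ (the gap between $0.99703$ and $\tfrac12\cdot 1.99405$ swallows the $O(1)$ error of the substitution). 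So the heart of the matter is the bound on the diameter.

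For that I would start from the windowing apparatus used for the elementary Erd\H os--Tur\'an bound. Fix a window length $T$, and for $u\in\ZZ$ put $f_u=f_u^{(T)}=\bigl|\,\cA\cap\{u-T+1,\dots,u\}\,\bigr|$. Three facts are at hand: $\sum_u f_u=kT$; the Sidon property yields $\sum_u\binom{f_u}{2}\le\binom{T}{2}$, since a nonzero value $d$ is the difference of at most one pair from $\cA$; and Cauchy--Schwarz over the support of $(f_u)$, whose size is at most $\diam(\cA)+T$, yields $\sum_u f_u^2\ge (kT)^2/(\diam(\cA)+T)$. Since $\sum_u f_u^2=kT+2\sum_u\binom{f_u}{2}$, combining the three facts and optimizing over $T\approx k^{3/2}$ reproduces $\diam(\cA)\ge k^2-2k^{3/2}+O(k)$; the goal is to push the secondary constant $2$ strictly below $2$.

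The refinement I would pursue keeps this single family of inequalities but exploits it at all scales simultaneously together with the rigidity of the scale profile, so that no Lindstr\"om--type second inequality is needed. Set $G(t)=\bigl|(\cA-\cA)\cap\{1,\dots,t\}\bigr|$ and $h(t)=\sum_u\binom{f_u^{(t)}}{2}=\sum_{d=1}^{t-1}(t-d)\,\mathbf 1[d\in\cA-\cA]$; then $h(t+1)-h(t)=G(t)$ and $G(t)-G(t-1)\in\{0,1\}$, so $h$ is convex with a non--decreasing unit--step slope bounded by $\min\{t,\binom k2\}$, and at every scale $t\le T$ the facts above read $h(t)\le\binom t2$ and $h(t)\ge \frac{(kt)^2}{2(\diam(\cA)+t)}-\frac{kt}{2}$. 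To cross below $2$ one extra input is needed, and this is where the computation enters: if the support of $(f_u^{(T)})$ is as large as $\diam(\cA)+T$, then $\cA$ has no large gaps, which forces its small differences to be spread out rather than to fill an initial segment, so $h(T)\le\binom T2-\delta$ with an explicit power--sized $\delta$; whereas if $\cA$ does have large gaps, the support of $(f_u^{(T)})$ is correspondingly shorter, so the Cauchy--Schwarz lower bound on $\sum_u f_u^2$ self--improves by the total excess of those gaps. These two effects trade off against each other, and I would quantify the trade--off by casting it as a finite optimization over the window length and a few shape parameters of the difference profile $G$ and the gap distribution of $\cA$, then solving that optimization numerically; its value is the stated $1.99405$. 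The delicate step, and the one I expect to be the main obstacle, is proving the unconditional lower bound $\delta$ on the ``missing small differences'' of a non--clustered Sidon set, sharp enough that the optimization returns a constant strictly below $2$: the elementary windowing alone only gives $\delta=0$, so a genuinely new (if numerically small) structural estimate is required here, and pinning down its constant is exactly what makes the argument computationally heavier than Erd\H os--Tur\'an's.
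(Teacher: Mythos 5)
Your reduction of the two statements to one another is fine, and your setup of the windowing quantities $f_u$ and the three basic facts (first moment, Sidon bound on $\sum\binom{f_u}{2}$, Cauchy--Schwarz on $\sum f_u^2$) is exactly the Erd\H os--Tur\'an skeleton the paper begins from. But the heart of your argument is a dichotomy that is different from the paper's, and the half of it that has to do real work is a claim you yourself flag as unproven. That claim --- ``if $\cA$ has no large gaps, then its small differences are spread out rather than filling an initial segment, giving $h(T)\le\binom T2-\delta$ with $\delta$ of size $\Omega(kT)$'' --- is precisely the step where a genuinely new idea is required, and as stated it is not justified. In fact, ``no large gaps'' is a very weak condition: for any near-extremal Sidon set the gaps are $\Theta(\sqrt k)\ll T\approx k^{3/2}$, so your ``large gaps'' branch essentially never fires, and you are left needing an unconditional lower bound on the missing small differences of the full set $\cA$. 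The paper's data (its Figure~\ref{fig:nupic2}) indicates that this quantity $S(\cA,T)$ for the \emph{full} set can indeed be small and irregular, so a bound of the strength you need is not available there.

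The paper's dichotomy is sharper and is driven by a different mechanism. Its Theorem~\ref{thm:ETexact} converts the Cauchy--Schwarz slack into an explicit variance term $V(\cA,T)=\sum_i(A_i-\bar A)^2$ and turns the whole argument into an \emph{equation}. The case split is then: either $V(\cA,T)$ is large --- not ``$\cA$ has a long gap'' but the much weaker ``the window counts $A_j$ deviate from the mean on a positive fraction of $j$'' --- in which case the equation directly gives an improved diameter bound; or else $V(\cA,T)$ is small, which (via monotonicity of $(A_j+A_{-j})/2$ on $[1,T]$ and the decomposition into the bands $U_1,\dots,U_5$) forces a dense band of elements of $\cA$ near both ends of $\cA$. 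The crucial move you do not have is what happens next: the paper does \emph{not} try to lower-bound the missing differences of $\cA$ itself. Instead it passes to the interior subset $\calM=\cA\cap(\max U_3,\,a_k-\max U_3)$ and observes, by the Sidon uniqueness of differences, that every small difference realized by the edge elements $L\cup R$ is necessarily absent from $\calM-\calM$; this makes $S(\calM,T_2)$ provably large, and then Theorem~\ref{thm:ETexact} is applied a second time, to $\calM$ with a shorter window $T_2$, to force $\diam(\calM)$ (and hence $\diam(\cA)$) up. That re-application to a subset is both what makes the structural estimate provable and what makes no Lindstr\"om-type second inequality necessary. Without it, the $\delta>0$ you need has no proof, and the proposal does not close.
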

The ``sufficiently large'' phrase is not fundamentally needed. We expect further numerical improvements will appear in the next few years and it is premature to overly optimize at this stage.

We define $b_k$ by $s_k = k^2-b_k k^{3/2}$, and
$b_{\infty}=\limsup_{k\to\infty} b_k$. The Erd\H{o}s-Tur\'an work
proves that $b_k <2$, and the Balogh-F\"uredi-Roy work proves that $b_\infty\le 1.996$. Explicit values of $b_k$ (see Figure~\ref{fig:exact}) are known for $1\le k \le 27$ thanks to the Distributed.net ``Optimal Golomb Ruler'' project~\cite{Distnet}.

\begin{figure}
    \begin{center}
        \includegraphics[width=5in]{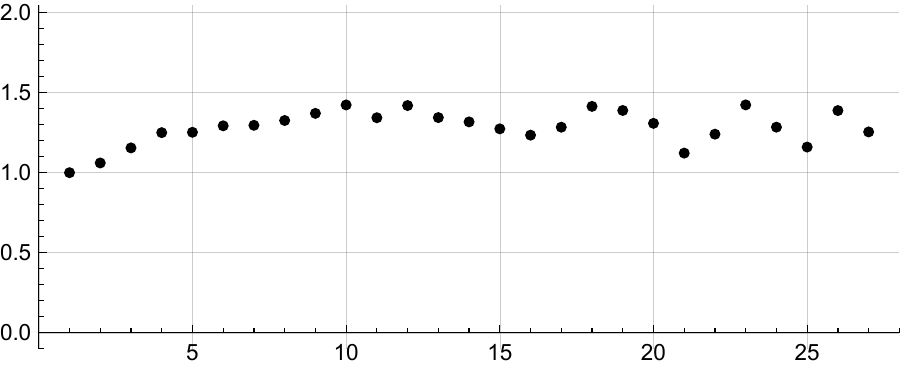}
    \end{center}
    \caption{Exact values of $b_k$ for $1\le k \le 27$.}
    \label{fig:exact}
\end{figure}

Shearer~\cite{Shearer} (perhaps around 1986) constructed the Sidon set examples of Singer and Bose for prime powers up to around 150, and thereby gave a reasonably sharp upper bound on $s_k$ for $k\le 150$. More recently, Dogon-Rokicki~\cite{DR} have taken this computation to its logical extreme. They computed the lower bound on $b_k$ that arises from looking at all subsets of modular dilations and translations of the sets described by Singer, Bose, and Ruzsa for all prime powers up to 40\,200. This reproduces the known optimal sets for all $k\ge 17$ (and most $k$ below 17, too). Their amazing data, which includes specific candidates for $k$-element Sidon sets with minimal diameter for $k<40\,000$, has been instrumental to this work. Their bound for $k<40\,000$ is shown in Figure~\ref{fig:DRbound}.
\begin{figure}
    \begin{center}
        \includegraphics[width=5in]{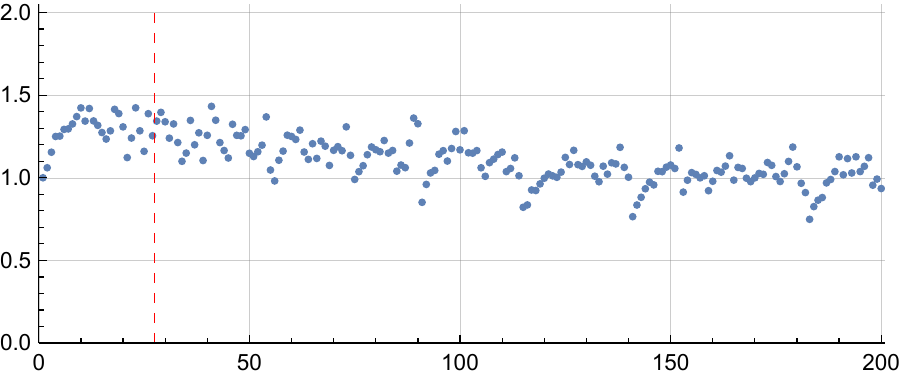}

        \includegraphics[width=5in]{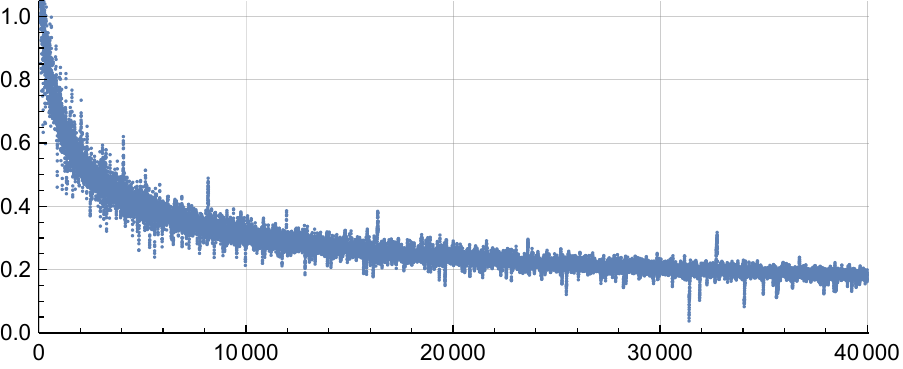}
    \end{center}
    \caption{Lower bound on $b_k$ for $1\le k \le 40\,000$, as computed by Dogon-Rokicki. The vertical red line separates the known exact values from the mere lower bounds.}
    \label{fig:DRbound}
\end{figure}

\section{The Erd\H{o}s-Tur\'an Sidon Set Equation}\label{sec:ET}
We first give a simplified account of the inequality that Erd\H{o}s-Tur\'an derived and used to prove $R(n)<n^{1/2}+O(n^{1/4})$, with a sharper derivation of the resulting bound. In fact, the bound given here is sharper than any explicit bound explicitly given in the literature of which this author is aware.

\begin{theorem}\label{thm:ErdosTuran2}
For $k\ge 1$, we have $s_k \geq k^2 - 2k^{3/2}+k+\sqrt k-1$. For $n\ge 1$, we have $R_2(n) < n^{1/2}+n^{1/4}+\frac12$.
\end{theorem}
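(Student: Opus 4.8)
The plan is to run the Erd\H{o}s--Tur\'an ``windowing'' count in its cleanest form to recover \eqref{eq:ETinequality}, and then to extract both stated bounds by choosing the free parameter $T$ so that the resulting inequality is \emph{exactly} the claimed one, with no estimation loss. First I would set up the count. Fix a Sidon set $\cA=\{a_1<\dots<a_k\}$ of diameter $D=a_k-a_1$; by translation assume $a_1=1$. For a positive integer $T$ and each integer $x$ set $f(x)=|\cA\cap\{x,x+1,\dots,x+T-1\}|$, and let $x$ run over the $D+T$ consecutive integers $a_1-T+1,\dots,a_k$. Counting incidences of elements of $\cA$ with these windows gives $\sum_x f(x)=kT$, since each $a_i$ lies in exactly $T$ windows. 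Counting incidences of \emph{pairs} from $\cA$ with windows gives $\sum_x \binom{f(x)}{2}=\sum_{i<j}\max\bigl(0,\,T-(a_j-a_i)\bigr)$, and since $\cA$ is Sidon the differences $a_j-a_i$ are distinct, so this is at most $\sum_{d=1}^{T-1}(T-d)=\binom{T}{2}$. Convexity of $t\mapsto\binom{t}{2}$ (equivalently Cauchy--Schwarz) gives $\sum_x\binom{f(x)}{2}\ge (D+T)\binom{\bar f}{2}$ with $\bar f=kT/(D+T)$; combining the two bounds and simplifying yields $D\ge \frac{k^2T}{T+k-1}-T$, which is \eqref{eq:ETinequality}.

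The core of the argument --- the ``book'' step --- is the optimization of this inequality. Writing $u=T+k-1$, it becomes $D\ge k^2+k-1-\bigl(u+\tfrac{k^2(k-1)}{u}\bigr)$, and the elementary factorization
\[
u+\frac{k^2(k-1)}{u}-\sqrt{k}\,(2k-1)=\frac{\bigl(u-(k-1)\sqrt{k}\bigr)\bigl(u-k^{3/2}\bigr)}{u}
\]
shows that $u+\tfrac{k^2(k-1)}{u}\le \sqrt{k}\,(2k-1)$ for every $u$ in the interval $\bigl[(k-1)\sqrt{k},\,k^{3/2}\bigr]$. This interval has length $k^{3/2}-(k-1)\sqrt{k}=\sqrt{k}\ge 1$ for $k\ge1$, so it contains an integer $u_0$ (for instance $u_0=\lfloor k^{3/2}\rfloor$), and $u_0\ge k$, so $T=u_0-k+1\ge 1$ is admissible in \eqref{eq:ETinequality}. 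With this $T$ we obtain $D\ge k^2+k-1-\sqrt{k}\,(2k-1)=k^2-2k^{3/2}+k+\sqrt{k}-1$, and since $\cA$ was arbitrary, $s_k\ge k^2-2k^{3/2}+k+\sqrt{k}-1$.

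Next I would deduce the $R_2(n)$ bound. Put $k=R_2(n)$; a $k$-element Sidon set lies in $\{1,\dots,n\}$ with diameter $D\le n-1$, so \eqref{eq:ETinequality} (again with a window of length $T$) rearranges to $T+\frac{(n-1)(k-1)}{T}\ge k^2-k+2-n$ for every positive integer $T$. Suppose for contradiction that $k\ge n^{1/2}+n^{1/4}+\tfrac12$, and set $S=k^2-k+2-n$. Then $S>0$, and expanding $S^2-4(n-1)(k-1)$ as a polynomial in $n^{1/4}$ --- which is routine, using that this quantity is increasing in $k$ on the range $k\ge n^{1/2}+n^{1/4}+\tfrac12$ and then evaluating at $k=n^{1/2}+n^{1/4}+\tfrac12$ --- shows it exceeds $1$. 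Hence the two roots of $T^2-ST+(n-1)(k-1)$ are real and more than $1$ apart, so some positive integer $T$ lies strictly between them, forcing $T+\frac{(n-1)(k-1)}{T}<S$, a contradiction. Therefore $R_2(n)<n^{1/2}+n^{1/4}+\tfrac12$.

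I expect the only real bookkeeping to be in this last step: verifying that the interval of admissible $T$ is genuinely nonempty and lies in $\ZZ_{\ge1}$ for \emph{all} $n$, not merely asymptotically (small $n$ being handled because then $R_2(n)<n^{1/2}+n^{1/4}+\tfrac12$ holds trivially and the contradiction branch is vacuous), together with the two endpoint/integrality checks in the book step. The windowing count and the factorization identity are completely elementary, and --- unlike BFR --- no appeal to Lindstr\"om's inequality or to interval-packing refinements is needed.
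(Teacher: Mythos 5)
Your proof is correct, and it recovers both claims of Theorem~\ref{thm:ErdosTuran2} from the same Erd\H{o}s--Tur\'an windowing count that the paper uses, but the algebra is arranged differently at two points, and the second difference is a genuine improvement in transparency.

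For the $s_k$ bound, the paper fixes $T=k^{3/2}-k+\epsilon$ with $\epsilon\in(0,1]$ and verifies directly that
\[\left(\frac{k^2T}{T+k-1}-T\right)-\left(k^2-2k^{3/2}+k+\sqrt{k}-1\right)=\frac{(1-\epsilon)(\sqrt{k}+\epsilon-1)}{T+k-1}\ge0,\]
whereas you substitute $u=T+k-1$ and exhibit the symmetric factorization
\[u+\frac{k^2(k-1)}{u}-\sqrt{k}(2k-1)=\frac{\bigl(u-(k-1)\sqrt{k}\bigr)\bigl(u-k^{3/2}\bigr)}{u}.\]
Both choices of $T$ are the same --- $T=\lfloor k^{3/2}\rfloor-k+1$ --- and both factorizations verify the same bound, but yours makes visible that the bound holds for every integer $u$ in the interval $\bigl[(k-1)\sqrt{k},k^{3/2}\bigr]$ of length $\sqrt{k}\ge1$, which is a cleaner explanation of why an admissible integer exists. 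The difference is cosmetic but nontrivial to discover; either proves the claim.

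For the $R_2(n)$ bound the routes genuinely diverge. The paper substitutes $k=n^{1/2}+n^{1/4}+\frac12$ into the $s_k$ bound and asserts (without details) that the resulting purported inequality ``simplifies algebraically and can be proved impossible by hand.'' That path requires handling the non-polynomial terms $(n^{1/2}+n^{1/4}+\tfrac12)^{3/2}$ and $(n^{1/2}+n^{1/4}+\tfrac12)^{1/2}$. You instead return to the Erd\H{o}s--Tur\'an inequality itself, rewritten as $T^2-ST+(n-1)(k-1)\ge0$ with $S=k^2-k+2-n$, and compute the discriminant at $k=n^{1/2}+n^{1/4}+\tfrac12$ directly: with $m=n^{1/4}$ it equals $3m^4+7m^3+\tfrac{15}{2}m^2+4m+\tfrac{17}{16}$, a genuine polynomial in $m$ with positive coefficients, so the check that it exceeds $1$ is actually routine. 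This eliminates the fractional powers entirely and makes the ``tedious but doable'' step fully concrete, at the small cost of checking that the discriminant is increasing in $k$ on the relevant range (which follows because $2S(2k-1)-4(n-1)$ has positive coefficients once expanded at the boundary value of $k$) and that the integer strictly between the two roots is positive (immediate from $S>1$ and both roots being nonnegative). Your proof is sound and, for the second half, somewhat more self-contained than what the paper presents.
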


\begin{proof}
    Let $\cA=\{0=a_1<a_2<\dots<a_k\}$ be a $k$-element Sidon set with minimum possible diameter. Let $A_j$ be the size of the intersection of $\cA$ with $[j-T,j)$.

    First, each of the $k$ elements of $\cA$ contributes to exactly $T$ of the $A_j$, so that
\begin{equation}\label{eq:first moment}
    \sum_{j=1}^{a_k+T} A_j = k T.
\end{equation}
By expanding $\big( A_j - \frac{kT}{a_k+T}\big)^2$ and using~\eqref{eq:first moment}, straightforward algebra now confirms that
\begin{equation}\label{eq:second moment}
    \sum_{j=1}^{a_k+T} A_j^2 = \frac{k^2T^2}{a_k+T} +
    \sum_{j=1}^{a_k+T} \bigg( A_j - \frac{kT}{a_k+T}\bigg)^2 \ge  \frac{k^2T^2}{a_k+T}.
\end{equation}

We can combine~\eqref{eq:first moment} and~\eqref{eq:second moment} to get
\begin{equation*}
    \sum_{j=1}^{a_k+T} \binom{A_j}{2} \ge \frac12 \frac{k^2T^2}{a_k+T} -\frac12 k T,
\end{equation*}
and this raises a new combinatorial interpretation. The binomial
coefficient $\binom{A_j}{2}$ is counting the number of pairs of
elements of $\cA$ in the interval $[j-T,j)$.
Each pair $(a_y,a_x)$ of elements of $\cA$ with $a_x>a_y$ contributes 1 to $\binom{A_j}{2}$ for $j \in [a_x+1,a_y+T]$ if $r=a_x-a_y \le T-1$, and contributes nothing to any $\binom{A_j}{2}$ if $r\ge T$. By the Sidon property, if $r\in \cA-\cA$, then there is a unique pair $(a_y,a_x)$ with $r=a_x-a_y$. Ergo, as $|[a_x+1,a_y+T]|=T-r$, we have
\begin{equation}\label{eq:Sidon property used}
    2\sum_{j=1}^{a_k+T} \binom{A_j}{2} = \sum_{\substack{r=1 \\ r \in \cA-\cA}}^{T-1} T-r
    = \frac{T(T-1)}{2} - \sum_{\substack{r=1 \\ r \not\in \cA-\cA}}^{T-1} T-r \le \frac{T(T-1)}2.
\end{equation}

Comparing the upper and lower bounds on $\sum \binom{A_j}{2}$, we now have
    \begin{equation*}
    \frac12 \frac{k^2T^2}{a_k+T} - \frac{kT}{2} \ge \frac{T(T-1)}{2}.
    \end{equation*}
Solving for $a_k$ gives the Erd\H{o}s-Tur\'an Sidon Set Inequality: For positive integers $T$,
    \[a_k \ge \frac{k^2 T}{T+k-1}-T.\]

Set $T=k^{3/2}-k+\epsilon$, with $\epsilon\in(0,1]$ chosen to make $T$ a positive integer.
We have the factorization
\begin{equation*}
      \left(\frac{k^2 T}{T+k-1}-T\right) - \big( k^2 - 2k^{3/2}+k+\sqrt k - 1 \big)
       = \frac{(1-\epsilon ) \big(\sqrt{k}+\epsilon -1\big)}{T+k-1},
\end{equation*}
which is clearly nonnegative for $k\ge 1$. Therefore, $s_k=a_k \ge k^2 - 2k^{3/2}+k+\sqrt k-1$, as claimed.
\end{proof}

We can convert this to a bound on $k=R_2(n)$ easily. The expression $k^2 - 2k^{3/2}+k+\sqrt k-1$ is a polynomial in $\sqrt k$, and one easily sees that its derivative (with respect to $\sqrt k$) is positive for $k\ge0$. Thus, the inequality $n-1\ge k^2 - 2k^{3/2}+k+\sqrt k-1$ implies an upper bound on $k$. To prove that $R_2(n)<n^{1/2}+n^{1/4}+\frac12$, for example, one only needs to derive a contradiction from
\[ n-1 \ge (n^{1/2}+n^{1/4}+\frac12)^2 - 2 (n^{1/2}+n^{1/4}+\tfrac12)^{3/2}+(n^{1/2}+n^{1/4}+\tfrac12)+\sqrt{n^{1/2}+n^{1/4}+\tfrac12}-1.\]
While tedious, this purported inequality simplifies algebraically and can be proved impossible by hand.

This proof contains two innovations and a useful consequence. The first innovation is on~\eqref{eq:second moment}, where we used algebra instead of Cauchy's Inequality. The algebra of course constitutes a proof of Cauchy's inequality, but the crux of this article rests on this nontrivial change. The second innovation is the discovery that this bound on $s_k$ produces a pleasantly useful factorization, allowing us to also replace asymptotic analysis entirely with simple algebra.

The consequence is made possible by the first innovation. We can use the equalities in~\eqref{eq:second moment} and~\eqref{eq:Sidon property used} instead of the inequalities, and we arrive at the following theorem, which drives the rest of this work.
\begin{theorem}\label{thm:ETexact}
Let $\cA$ be a finite Sidon set of integers, and $T$ be a positive integer. Then
\begin{equation}
\diam(\cA) = \frac{|\cA|^2 T^2}{T(T+|\cA|-1)-(2S(\cA,T)+V(\cA,T))}-T,
\end{equation}
where
    \begin{align*}
    S(\cA,T) &:= \sum_{\substack{r=1\\r\not\in \cA-\cA}}^{T-1} (T-r),\\
    V(\cA,T) &:=\sum_{i=\min\cA+1}^{T+\max\cA} \left( A_i - \frac{kT}{T+\max \cA}\right)^2,
    \end{align*}
where $A_i := | \cA \cap [i-T,i)|$.
\end{theorem}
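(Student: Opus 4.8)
The plan is to re-run the proof of Theorem~\ref{thm:ErdosTuran2} essentially verbatim, but to retain the two nonnegative quantities that were discarded when the inequalities \eqref{eq:second moment} and \eqref{eq:Sidon property used} were invoked. After translating so that $\min\cA=0$ (so that $\max\cA=\diam(\cA)$ and $\frac{kT}{T+\max\cA}$ is exactly the mean of the counts $A_i$), put $k=|\cA|$ and $N=\diam(\cA)+T$; the windows $[i-T,i)$ with $i\in\{1,\dots,N\}$ are precisely those that can meet $\cA$, and the first-moment identity $\sum_{i=1}^N A_i=kT$ holds exactly as before, since each element of $\cA$ lies in exactly $T$ of the windows.

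First I would record the exact version of \eqref{eq:second moment}: expanding $\sum_{i=1}^{N}\bigl(A_i-\tfrac{kT}{N}\bigr)^2$ and substituting $\sum A_i=kT$ gives, with equality, $\sum_{i=1}^N A_i^2=\frac{k^2T^2}{N}+V(\cA,T)$ --- this is just ``second moment $=N\cdot(\text{mean})^2+(\text{variance})$'', the algebraic identity that the paper uses in place of Cauchy's inequality. Next I would record the exact version of \eqref{eq:Sidon property used}: the pair $\{a_y<a_x\}$ with difference $r=a_x-a_y$ contributes $1$ to $\binom{A_i}{2}$ for exactly $T-r$ values of $i$ when $r\le T-1$ and for no $i$ when $r\ge T$, and by the Sidon property each $r$ is realized by at most one such pair, so $\sum_{i=1}^N\binom{A_i}{2}=\sum_{r\in\cA-\cA,\,1\le r\le T-1}(T-r)=\frac{T(T-1)}{2}-S(\cA,T)$. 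Combining this with the purely algebraic $\sum_{i=1}^N\binom{A_i}{2}=\tfrac12\sum A_i^2-\tfrac12\sum A_i=\tfrac12\sum A_i^2-\tfrac{kT}{2}$ yields $\sum_{i=1}^N A_i^2=kT+T(T-1)-2S(\cA,T)=T(T+k-1)-2S(\cA,T)$.

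Equating the two expressions for $\sum_{i=1}^N A_i^2$ then gives $\frac{k^2T^2}{N}=T(T+k-1)-\bigl(2S(\cA,T)+V(\cA,T)\bigr)$; solving for $N$ and subtracting $T$ produces the claimed formula for $\diam(\cA)=N-T$. I do not expect any step to be genuinely hard: everything is the algebra already rehearsed in Theorem~\ref{thm:ErdosTuran2}, run with ``$=$'' in place of ``$\ge$'' and ``$\le$''. The one place that wants care is the pair-counting identity --- correctly handling the cutoff $r\ge T$ and invoking the Sidon property so that each difference is counted exactly once --- together with the bookkeeping observation that with $\min\cA=0$ the quantity $\frac{kT}{T+\max\cA}$ really is the mean $\frac1N\sum A_i$, which is what forces the cross term in the variance expansion to cancel and lets the formula be stated in terms of $V(\cA,T)$ rather than some translate-dependent quantity.
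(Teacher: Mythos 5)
Your proposal is correct and follows exactly the route the paper indicates for Theorem~\ref{thm:ETexact}: rerun the Erd\H{o}s--Tur\'an windowing argument from Theorem~\ref{thm:ErdosTuran2}, retaining the discarded nonnegative quantities so that \eqref{eq:second moment} becomes $\sum A_i^2 = \frac{k^2T^2}{N}+V$ and \eqref{eq:Sidon property used} becomes $2\sum\binom{A_i}{2}=\frac{T(T-1)}{2}-S$, then equate and solve. You also correctly flag the one normalization the paper leaves implicit, namely $\min\cA=0$, which is what makes $kT/(T+\max\cA)$ the true mean of the $A_i$ and kills the cross term in the variance expansion.
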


We find it philosophically interesting that Theorem~\ref{thm:ETexact} gives an \emph{equality}. Given $|\cA|$, any lower bound on $\diam(\cA)$ is equivalent to a lower bound on $2S(\cA,T)+V(\cA,T)$, and vice versa. Know any three of $|\cA|,\diam(\cA),V(\cA,T),S(\cA,T)$, and the fourth is uniquely determined.

For $i\le T$, one has $A_i \le R_2(i)$, and this gives a ``trivial'' lower bound on $V(\cA,T)$. This allows Theorem~\ref{thm:ETexact} to improve the inequality $R_2(n)<n^{1/2}+n^{1/4}+\frac12$ explicitly by $O(1)$. We mention this only because the number $1/2$ has been called the limit of what can be accomplished with the Erd\H{o}s-Tur\'an style argument. 

\subsection{Exploring the Erd\H{o}s-Tur\'an Sidon Set Equation}
This subsection contains no logic, but explores the dataset of Dogon-Rokicki~\cite{DR} to motivate the rest of this work, and perhaps also suggest avenues for further progress.

Asymptotic analysis gives that if $2S(\cA,T)+V(\cA,T) \ge \nu k T$ (where $\nu\in (0,1)$ is fixed and $T=\tau k^{3/2}$ is allowed to vary), then $$\diam(\cA) \ge k^2 - 2\sqrt{1-\nu} k^{3/2}+O(k).$$ In particular, we need to find a bound on $2S+V$ that is $\Omega(kT)$ to impact the $k^{3/2}$ coefficient. In Figure~\ref{fig:nupic}, we show $V(\cA,k^{3/2})/k^{5/2}$ for $k<4000$.
\begin{figure}
    \begin{center}
        \includegraphics[width=5in]{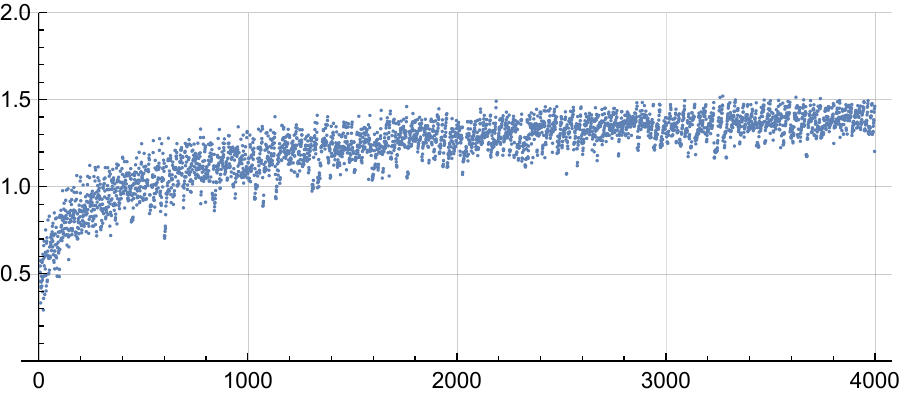}
    \end{center}
    \caption{$V(\cA,k^{3/2})/ k^{5/2}$ for the Sidon sets $\cA$ in the Dogon-Rokicki dataset ($28\le k <4000$).}
    \label{fig:nupic}
\end{figure}

In contrast to $V(\cA,T)$, which seems to be consistently more than $kT$ in the Dogon-Rokicki dataset, the contribution from $S(\cA,T)$ is small and irregular. See Figure~\ref{fig:nupic2}. The nearly vertical patterns in the data points in Figure~\ref{fig:nupic} and~\ref{fig:nupic2} are not visual artifacts, but possibly arise from many of the sets with close $k$ having small symmetric difference, and so similar values of $V,S$.
\begin{figure}[h]
    \begin{center}
        \includegraphics[width=5in]{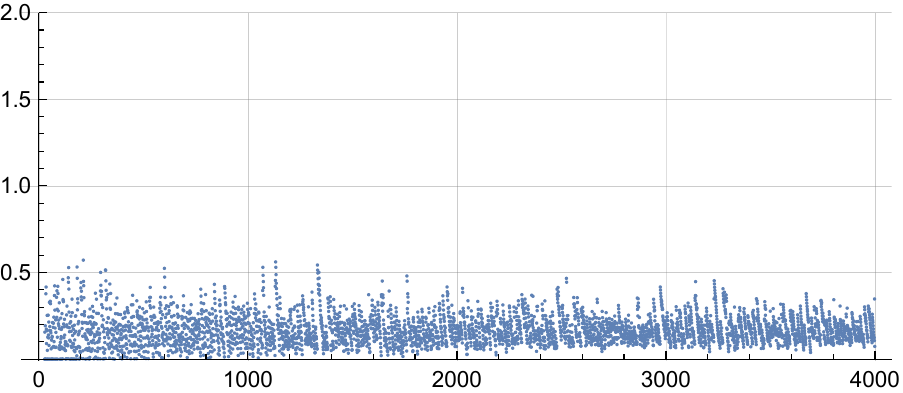}
    \end{center}
    \caption{$2S(\cA,k^{3/2})/ k^{5/2}$ for the Sidon sets $\cA$ in the Dogon-Rokicki dataset ($28\le k <4000$).}
    \label{fig:nupic2}
\end{figure}

There is a negative correlation between $V(\cA,T)$ and $S(\cA,T)$, as can be seen in Figures~\ref{fig:nupic3} and~\ref{fig:correlation}.
\begin{figure}[h!]
    \begin{center}
        \includegraphics[width=5in]{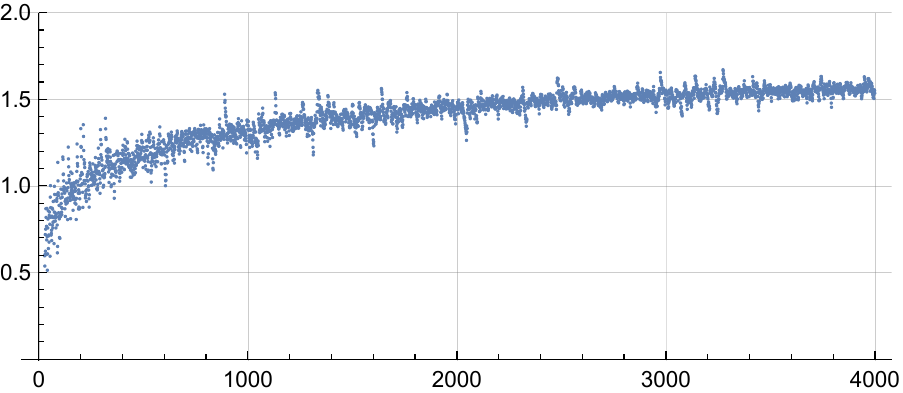}
    \end{center}
    \caption{$(V(\cA,k^{3/2})+2S(\cA,k^{3/2})/ k^{5/2}$ for the Sidon sets $\cA$ in the Dogon-Rokicki dataset ($28\le k <4000$). Notice that this graph appears smoother than those in Figures~\ref{fig:nupic} and~\ref{fig:nupic2}, showing the negative correlation.}
    \label{fig:nupic3}
\end{figure}[h!]
\begin{figure}[t!]
    \begin{center}
        \includegraphics[width=5in]{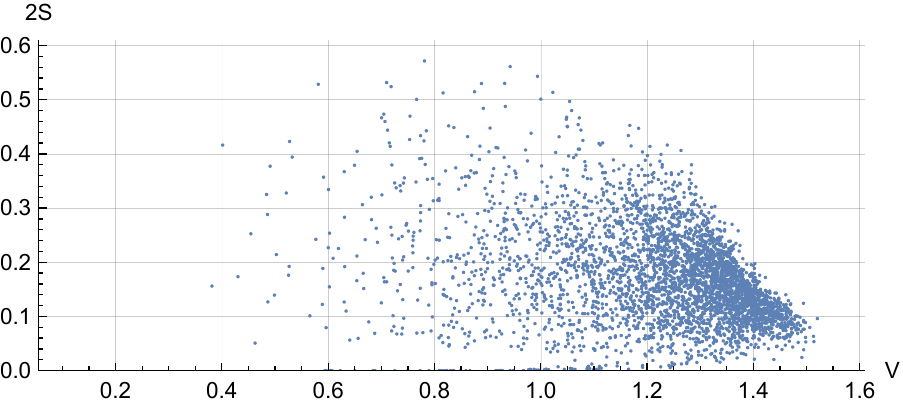}
    \end{center}
    \caption{The points $(V(\cA,k^{3/2}),2S(\cA,k^{3/2})/ k^{5/2}$ for the Sidon sets $\cA$ in the Dogon-Rokicki dataset ($28\le k <4000$). Notice that when $V$ is particularly large, $2S$ is particularly small.}
    \label{fig:correlation}
\end{figure}

From these images, it seems apparent that one should focus more on $V$ than $S$. In Figure~\ref{fig:A detail} we show four plots of $A_i:=|\cA \cap [i-k^{3/2},i)|$, with the average value as a red dashed horizontal line, and $i=T$,$i=a_k$ as vertical black lines. The four, which appear to be typical, show that $A_i$ is fairly smooth between $T$ and $a_k$. In fact, about half of $V(\cA,T)$ arises from $i\le T$ and $i\ge a_k$. Figure~\ref{fig:edgevariance} shows the proportion of $V$ that arises from the edge values of $i$.
\begin{figure}
    \begin{center}
        \includegraphics[width=5in]{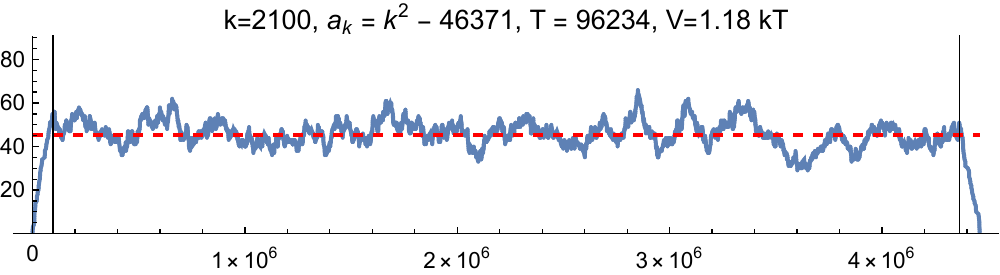}

        \includegraphics[width=5in]{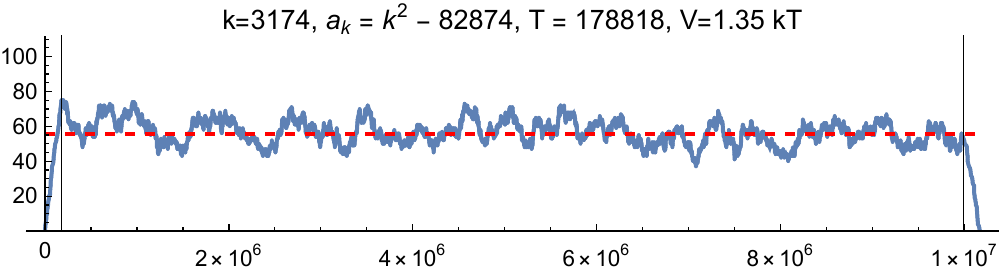}

        \includegraphics[width=5in]{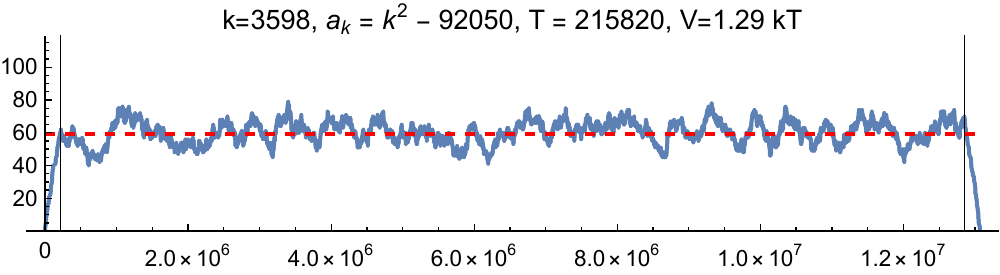}

        \includegraphics[width=5in]{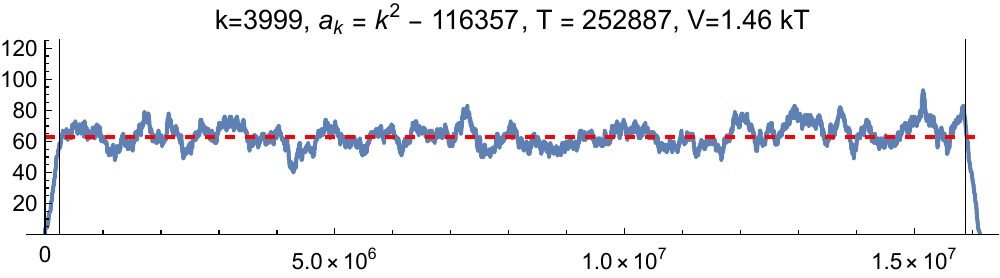}
    \end{center}
    \caption{Plot of $A_i$ as a function of $i$ for four typical thick Sidon sets. The average value of $A_i$ is shown as a horizontal red line, and the values $i=T$ and $i=a_k$ are shown as vertical black lines.}
    \label{fig:A detail}
\end{figure}
\begin{figure}
    \begin{center}
        \includegraphics[width=5in]{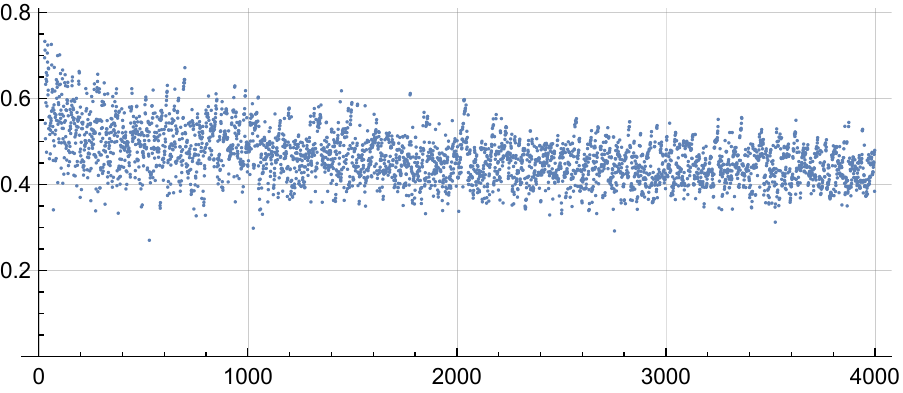}
    \end{center}
    \caption{$V(\cA,k^{3/2})$ is defined by a sum with $i$ running from 1 to $a_k+k^{3/2}$. This graph shows the proportion of $V(\cA,k^{3/2})$ that comes from $i\le T$ and $i\ge a_k$.}
    \label{fig:edgevariance}
\end{figure}

\section{The BFR Gambit}
In~\cite{BFR}, a small variance (which arose by way of an inequality from coding theory, and a rephrasing of the Erd\H{o}s-Tur\'an argument in terms of degree sequences of hypergraphs) was used to imply a lumpiness in the difference set. This lumpiness was then used to supercharge a proof of Lindstr\"om's, yielding
\[ R_2(n) < n^{1/2}+0.998 n^{1/4} \text{ for sufficiently large $n$.}\]
It is, as the proven value was rounded up to $0.998$, equivalent to
\[ s_k \ge k^2 - 1.996 k^{3/2} \text{ for sufficiently large $k$.}\]
This work was the first real improvement in the bounds, upper or lower, of $R_2(n)$ since 1941. It was the direct inspiration for the current work.

We play the same gambit, but exploit the phenomenon in a different way. Instead of using the Lindstr\"om proof, we re-use Theorem~\ref{thm:ETexact} on a subset of $\cA$. That is, either $\cA$ has a large variance (and so a large diameter), or a large subset $\calM \subseteq \cA$ has a difference set that does not include many small values, and so the small diffs is large, forcing $\calM$ to have a large diameter.

\subsection{A Toy Version of our Argument}
In this short subsection, we give a simple argument using the Erd\H{o}s-Tur\'an Sidon Set Equation to prove that $b_\infty \le 1.999$. In the next subsection, we engage in a somewhat harder-to-optimize version of this argument with more parameters that leads to a better bound on $b_\infty$ than that given in BFR.

Normalize as above, so that $a_1=0$, and set $T=k^{3/2}$ (it needs to be an integer, but we will ignore this issue in this subsection, and we shall also be cavalier about inequalities that may only hold for sufficiently large $k$). For positive $i$, set $A_i:=|\cA \cap [i-T,i)|$ and $A_{-i}:=|\cA \cap [a_k-i+1,a_k+T+1-i)|$, and let $\bar A := kT/(T+a_k)$, the average value of $A_i$ for $1\le i \le a_k+T$. If $a_k\le k^2-T$, then $\bar A \sim \sqrt k$. We begin by considering $(A_i+A_{-i})/2$ as a function of $i$; it is monotone increasing for $1\le i \le T$. Set
\begin{align*}
    U_1 &:= \left\{ 1\le j \le T : \tfrac{A_i+A_{-i}}2 \le \tfrac89 \bar A \right\} \\
    U_2 &:=  \left\{ 1\le j \le T : \tfrac89 \bar A < \tfrac{A_i+A_{-i}}2 < \tfrac{10}9 \bar A \right\} \\
    U_3 &:=  \left\{ 1\le j \le T : \tfrac{10}9 \bar A \le \tfrac{A_i+A_{-i}}2 \right\}
\end{align*}

If $|U_1|+|U_3| \ge T/16$, then
\begin{align*}
    V(\cA,T) &:= \sum_{i=1}^{T+a_k} (A_i - \bar A)^2 \\
    &\ge \sum_{i\in U_1\cup U_3} \left[  (A_i - \bar A)^2  + (A_{-i}-\bar A)^2 \right] \\
    &\ge \sum_{i\in U_1\cup U_3}  2 (\tfrac{A_i+A_{-i}}2 - \bar A)^2 \\
    &\ge \left(|U_1\cup U_2|\right) \cdot 2\left(\tfrac19\right)^2\bar A^2 \geq \frac{1}{648} kT.
\end{align*}

We can now appeal to Theorem~\ref{thm:ETexact}:
\[\diam(\cA) \ge \frac{k^2 T^2}{T(T+k-1)-\frac{1}{16}\frac{2}{81} kT}-T = k^2 - (2-\frac{1}{648})k^{3/2}+O(k).\]
Thus, $b_\infty \le 2-\frac{1}{648}\le 1.999$ if $|U_1|+|U_3|\ge \frac T{16}$.

Now suppose that $|U_1|+|U_3|\le \frac T{16}$, so that $|U_2|\ge \frac{15}{16}T$. Set
$${\calM}:= \cA \cap [|U_1|+|U_2| , a_k-|U_1|-|U_2|].$$
Clearly
\[|\calM| = k - A_{|U_1|+|U_2|} - A_{-(|U_1|+|U_2|)} \ge k-\frac{20}9 \bar A.\]

Set $L=\cA \cap[0,|U_1|)$ and $R=\cA \cap (a_k-|U_1|,a_k]$. By the definition of $U_1$, we see that
\[|L \cup R| \ge 2 \cdot \frac 89 \bar A.\]
We also see that $L\cup R$ has $\binom{|L|}{2}+\binom{|R|}{2}$ pairs of elements whose difference is at most $|U_1|$. We have
$$\binom{|L|}{2}+\binom{|R|}{2} \ge (\frac89 \bar A)^2-(\frac89 \bar A) \sim \frac{64}{81}k$$ differences, each at most $T/16$. Set $T_2=\frac38 T$, which is larger than $T/16$, so that
\begin{align*}
    S(\calM,T_2) &:= \sum_{\substack{r=1 \\ r\not\in\calM-\calM}}^{T_2-1} (T_2-r) \\
    &\ge \sum_{\substack{r=1 \\ r\in(L-L)\cup(R-R)}}^{T_2-1} (T_2-r) \\
    &\ge \frac{64}{81}k \cdot \left(\frac38 T - |U_1|\right) 
\end{align*}

Theorem~\ref{thm:ETexact} now gives
\begin{align*}
    \diam(\cA)&= \left(\min\calM - a_1\right) + \diam(\calM) + \left(a_k-\max\calM \right) \\
    &\ge 2(|U_1|+|U_2|) + \frac{|\calM|^2 T_2^2}{T_2(T_2+|\calM|-1)-2\cdot \left( \frac{64}{81}k \cdot \left(\frac38 T - |U_1|\right) \right)}-T_2 \\
    &= k^2 - \left(\frac{6734}{729}\frac{|U_1|}{T}-2 \frac{|U_2|}{T}+\frac{6361}{1944} \right) k^{3/2}
\end{align*}      
The extremal values for $|U_1|,|U_2|$ are $\frac{1}{16}T$ and $\frac{15}{16}T$, respectively, whence
$$b_\infty \le \frac{11515}{5832} < 1.999.$$

To get better numbers, we should have more levels than just $8/9,10/9$, and we should allow $T=\tau k^{3/2}$ to vary. We pursue this in the next subsection.

\subsection{More Parameters Leads to a Better Bound}

We begin by fixing real numbers $\tau>0$, $\alpha\in (0,1)$, $\beta\in (0,1)$, $\delta\in (0,1/2)$, $\tau_2>0$. Specific values that work out well are:
\begin{align}
    \label{set tau}   \tau&=\frac{59}{58}  \approx 1.017\\
    \label{set alpha} \alpha&=\frac{80}{319}  \approx 0.2508\\
    \label{set beta}  \beta&=\frac{195}{356}  \approx 0.5478\\
    \label{set delta} \delta&= \frac{398773753333438270}{2448810518987915261} \approx 0.1628\\
    \label{set tau2}  \tau_2&=\frac{51}{223} \approx 0.2287
\end{align}
The reason for the ridiculous value of $\delta$ is to make two bounds we will encounter exactly equal.

To better wrangle the equations, set $\min\cA=0$, $|\cA|=k$, $T=\tau k^{3/2}+\epsilon_1$ (all of the $\epsilon$ variables are chosen in $[0,1)$ to make some other variable an integer), and $\bar A=\frac{k T}{a_k+T}$, and assume that $s_k=a_k \le k^2-T$, so that $\tau \sqrt k \le \bar A < \tau \sqrt k + \epsilon_1/k$.

For $1\le j \le a_k+T$, we have $A_j := | \cA \cap [j-T,j) |$. For negative subscripts, we set $A_{-j}:= A_{a_k+T+1-j}$. For example, $A_1=|\{0\}| = 1 $ and $A_{-1}=|\{a_k\}|=1$. As a function of $j$, we see that $(A_j+A_{-j})/2$ is nondecreasing for $1\le j \le T$, and increases by at most 1 as $j$ increases by 1.

\begin{figure}[t]
    \begin{center}
        \includegraphics[width=5in]{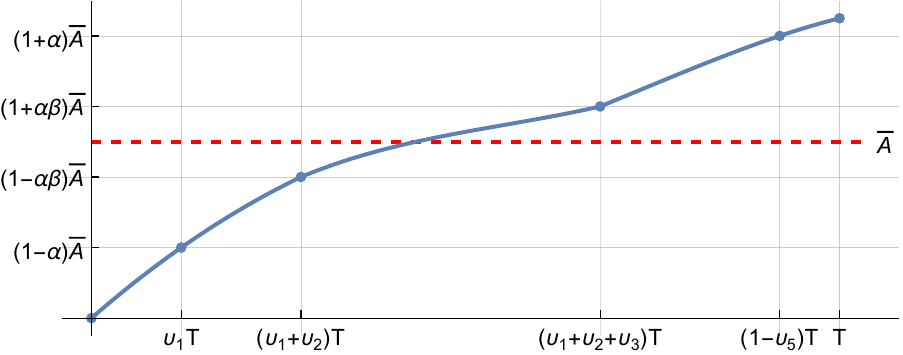}
    \end{center}
    \caption{Imagined plot of $(A_j+A_{-j})/2$ as a function of $j$, with the meanings of $\upsilon_1, \upsilon_2, \upsilon_3, \upsilon_4, \upsilon_5$ depicted.}
\end{figure}

We partition the interval $[1,T]$ into 5 parts:\footnote{Setting $\beta=1$ forces $U_2=U_4=\emptyset$, and is thereby substantially simpler, but leads merely $b_\infty \le 1.999$.}
\begin{align*}
    U_1 &:= \left\{ j \in [1,T]: \tfrac{A_j+A_{-j}}{2} \le (1-\alpha)\bar A \right\},\\
    U_2 &:= \left\{ j \in [1,T]: (1-\alpha)\bar A < \tfrac{A_j+A_{-j}}{2} \le (1-\alpha\beta)\bar A \right\},\\
    U_3 &:= \left\{ j \in [1,T]: (1-\alpha\beta)\bar A < \tfrac{A_j+A_{-j}}{2} \le (1+\alpha\beta)\bar A \right\},\\
    U_4 &:= \left\{ j \in [1,T]: (1+\alpha\beta)\bar A \le \tfrac{A_j+A_{-j}}{2} \le (1+\alpha)\bar A \right\},\\
    U_5 &:= \left\{ j \in [1,T]: (1+\alpha)\bar A < \tfrac{A_j+A_{-j}}{2} \right\},
\end{align*}
We set $u_i$ by $u_i T = |U_i|$, and set $y:=u_1+u_5, x:=u_2+u_4$. For example, $x+y+u_3=1$ and $u_1+u_2+u_3 \ge 1-x-y$.

\begin{claim}
    If $y+\beta^2x\ge \beta^2\delta$, then $V(\cA,T)\ge 2\alpha^2\beta^2\tau^2 \, kT$.
\end{claim}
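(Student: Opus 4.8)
The plan is to mimic the toy version's variance estimate, but now exploiting the two outer shells $U_1\cup U_5$ (contributing the full factor $\alpha$) and the two middle shells $U_2\cup U_4$ (contributing only the smaller factor $\alpha\beta$), and to show that as long as these shells are not too small — precisely, as long as $y+\beta^2 x\ge \beta^2\delta$ — the variance $V(\cA,T)$ is already large enough. First I would recall that $V(\cA,T)=\sum_{i=1}^{T+a_k}(A_i-\bar A)^2$, and since each index $j\in[1,T]$ contributes the two \emph{distinct} terms $i=j$ and $i=-j$ (i.e.\ $i=a_k+T+1-j$), we may bound
\[
V(\cA,T)\ge \sum_{j=1}^{T}\Big[(A_j-\bar A)^2+(A_{-j}-\bar A)^2\Big]\ge \sum_{j=1}^{T} 2\Big(\tfrac{A_j+A_{-j}}{2}-\bar A\Big)^2,
\]
the last step by the convexity/QM-AM inequality $u^2+v^2\ge \tfrac12(u+v)^2$ applied to $u=A_j-\bar A$, $v=A_{-j}-\bar A$. (One should check the outer indices $j\le T$ really do hit $2T$ distinct values of $i$, which holds as soon as $a_k\ge T-1$, guaranteed for large $k$ under $a_k\le k^2-T$.)

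Next I would discard all terms with $j\in U_3$ and keep only $j\in U_1\cup U_2\cup U_4\cup U_5$. On $U_1\cup U_5$ the defining inequality gives $\big|\tfrac{A_j+A_{-j}}{2}-\bar A\big|>\alpha\bar A$ — actually $\ge \alpha\bar A$ on $U_1$ and $>\alpha\bar A$ on $U_5$, so in any case $\big(\tfrac{A_j+A_{-j}}{2}-\bar A\big)^2\ge \alpha^2\bar A^2$; on $U_2\cup U_4$ we get $\big(\tfrac{A_j+A_{-j}}{2}-\bar A\big)^2\ge \alpha^2\beta^2\bar A^2$. Hence
\[
V(\cA,T)\ \ge\ 2\alpha^2\bar A^2\,|U_1\cup U_5|\ +\ 2\alpha^2\beta^2\bar A^2\,|U_2\cup U_4|\ =\ 2\alpha^2\bar A^2 T\,(y+\beta^2 x),
\]
using $|U_1\cup U_5|=yT$, $|U_2\cup U_4|=xT$ and the fact that these four shells are pairwise disjoint. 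Since we are in the regime $a_k\le k^2-T$, we have $\bar A\ge \tau\sqrt k$, so $\bar A^2\ge \tau^2 k$, and thus $V(\cA,T)\ge 2\alpha^2\beta^2\tau^2\,(y+\beta^2 x)/\beta^2\cdot kT$. Wait — more directly: $V(\cA,T)\ge 2\alpha^2\tau^2 k T\,(y+\beta^2 x)$, and invoking the hypothesis $y+\beta^2 x\ge \beta^2\delta$ is overkill here; the cleaner route is to note $y+\beta^2 x\ge \beta^2(y+x)\cdot$(something)\dots

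Let me restate the last step correctly. Since $\beta\in(0,1)$ we have $y\ge \beta^2 y$, so $y+\beta^2 x\ge \beta^2(y+x)$, but that is not what we want either. The point is simply that the quantity appearing in the variance bound is \emph{exactly} $y+\beta^2 x$ (after factoring out $\beta^2$ we'd get $\beta^2(\beta^{-2}y+x)$, not helpful), so the right move is: the claim's hypothesis is literally $y+\beta^2 x\ge \beta^2\delta$, and plugging this in,
\[
V(\cA,T)\ \ge\ 2\alpha^2\tau^2\,kT\,(y+\beta^2 x)\ \ge\ 2\alpha^2\tau^2\,kT\cdot \beta^2\delta\ \ge\ 2\alpha^2\beta^2\tau^2\,kT,
\]
where the final inequality uses $\delta\le 1$ — but $\delta\approx 0.163<1$, so that last step goes the \emph{wrong} way. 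So the genuine statement must be $V(\cA,T)\ge 2\alpha^2\beta^2\tau^2\delta\,kT$, or the constant in the claim is meant with a $\delta$ that I am misreading; I would resolve this by carrying the $\delta$ through honestly and checking against the claim's exact wording, writing $V(\cA,T)\ge 2\alpha^2\tau^2(y+\beta^2x)\,kT$ as the master inequality and substituting the hypothesis at the very end. \emph{The main obstacle} I anticipate is exactly this bookkeeping of constants — making sure the two pieces ($\alpha$-shells and $\alpha\beta$-shells) are combined with the correct weights and that the threshold $\beta^2\delta$ matches the factor $\beta^2$ in the conclusion — together with the minor technical point that the $2T$ edge-indices $\{j,\,a_k+T+1-j : 1\le j\le T\}$ are genuinely distinct and lie in the summation range $[1,a_k+T]$, which needs $a_k+T+1-T\ge 1$ and $a_k+T+1-1\le a_k+T$, i.e.\ nothing, plus $a_k\ge T-1$ for non-overlap, all fine for large $k$. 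Everything else is the convexity inequality and arithmetic.
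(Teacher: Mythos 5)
Your argument matches the paper's proof step for step, and you have correctly spotted a typo in the stated claim: the conclusion should read $V(\cA,T)\ge 2\alpha^2\beta^2\tau^2\delta\,kT$ (with the factor $\delta$), which is exactly what both you and the paper derive by combining the master inequality $V(\cA,T)\ge 2\alpha^2\tau^2(y+\beta^2x)\,kT$ with the hypothesis $y+\beta^2x\ge\beta^2\delta$. The $\delta$-corrected bound is also the one actually consumed in Claim~\ref{claim:bound from variance}, where the $k^{3/2}$-coefficient comes out as $\tau+\tau^{-1}-2\alpha^2\beta^2\delta\tau$.
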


\begin{proof}[Proof of Claim:]
\begin{align*}
    V(\cA,T)
    &\ge \sum_{j=1}^T (A_j-\bar A)^2 + (A_{-j}-\bar A)^2 \\
    &\ge \sum_{j=1}^T 2\left(\tfrac{A_j+A_{-j}}{2} - \bar A\right)^2 \\
    &= 2 \sum_{i=1}^5 \sum_{j\in U_i} \left(\tfrac{A_j+A_{-j}}{2} - \bar A\right)^2\\
    &\ge 2(|U_1|+|U_5|) \alpha^2 \bar A^2+2(|U_2|+|U_4|)(\alpha\beta)^2 \bar A^2 \\
    &\ge 2\alpha^2\tau^2(y+\beta^2x) \, kT.
\end{align*}
By hypothesis of this claim, $y+\beta^2x \ge \beta^2 \delta$, and so $V(\cA,T)\ge 2\alpha^2\beta^2\tau^2\delta \,kT$.
\end{proof}

\begin{claim}\label{claim:bound from variance}
    If $y+\beta^2x\ge \beta^2\delta$, then
    $$b_\infty \le \tau +\frac{1}{\tau } -2 \alpha ^2\beta^2 \delta \tau  \le \frac{3869247756486775922024264545}{1940405707787319054606925942} \le 1.99405.$$
\end{claim}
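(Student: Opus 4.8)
The plan is to feed the variance lower bound just established into the exact identity of Theorem~\ref{thm:ETexact}. Every summand $T-r$ appearing in $S(\cA,T)$ is at least $1$, so $S(\cA,T)\ge 0$; hence under the hypothesis $y+\beta^2x\ge\beta^2\delta$ the preceding claim gives
\[
2S(\cA,T)+V(\cA,T)\ \ge\ V(\cA,T)\ \ge\ 2\alpha^2\beta^2\tau^2\delta\,kT .
\]
Since $x\mapsto \tfrac{k^2T^2}{x}-T$ is strictly decreasing for $x>0$, substituting this lower bound for $2S+V$ into the identity of Theorem~\ref{thm:ETexact} yields
\[
\diam(\cA)\ \ge\ \frac{k^2T^2}{T(T+k-1)-2\alpha^2\beta^2\tau^2\delta\,kT}-T
\ =\ \frac{k^2T}{\,T+k-1-2\alpha^2\beta^2\tau^2\delta\,k\,}-T .
\]
I would first record that this step is legitimate for all large $k$: the true denominator in Theorem~\ref{thm:ETexact} equals $k^2T^2/(\diam(\cA)+T)>0$ and is at most the right-hand quantity $T\bigl(\tau k^{3/2}+(1-2\alpha^2\beta^2\tau^2\delta)k+O(1)\bigr)$, which is itself eventually positive because the $\tau k^{3/2}$ term dominates.

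Next I would dispose of the degenerate possibility $a_k>k^2-T$: there $\diam(\cA)=a_k>k^2-\tau k^{3/2}-1$, so $b_k<\tau+o(1)<1.99405$ and there is nothing to prove. In the complementary range $a_k\le k^2-T$ the standing normalization $\bar A\ge\tau\sqrt k$ of this subsection is in force, so the preceding claim applies as written and the displayed lower bound for $\diam(\cA)$ holds. The remaining work is a routine asymptotic expansion: write $T=\tau k^{3/2}+\epsilon_1$ with $\epsilon_1\in[0,1)$, factor the right-hand denominator as $\tau k^{3/2}(1+v)$ with $v=\tfrac{1-2\alpha^2\beta^2\tau^2\delta}{\tau}\,k^{-1/2}+O(k^{-3/2})$, observe that the numerator divided by $\tau k^{3/2}$ is $k^2+O(k^{1/2})$, and use $\tfrac1{1+v}=1-v+O(v^2)$. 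Collecting powers of $k$, the $k^{3/2}$-coefficient of $\tfrac{k^2T}{T+k-1-2\alpha^2\beta^2\tau^2\delta k}$ comes out to $-\tfrac{1-2\alpha^2\beta^2\tau^2\delta}{\tau}$ and every other contribution is $O(k)$; subtracting the leftover $T=\tau k^{3/2}+O(1)$ gives
\[
\diam(\cA)\ \ge\ k^2-\Bigl(\tau+\tfrac1\tau-2\alpha^2\beta^2\delta\tau\Bigr)k^{3/2}+O(k).
\]
This holds for every $k$-element Sidon set $\cA$ satisfying the claim's hypothesis, in particular for one of minimum diameter; letting $k\to\infty$ then gives $b_\infty\le\tau+\tfrac1\tau-2\alpha^2\beta^2\delta\tau$.

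To finish, I would substitute the prescribed rational values $\tau=\tfrac{59}{58}$, $\alpha=\tfrac{80}{319}$, $\beta=\tfrac{195}{356}$, $\delta=\tfrac{398773753333438270}{2448810518987915261}$ and carry out the exact rational arithmetic, obtaining $\tau+\tfrac1\tau-2\alpha^2\beta^2\delta\tau=\tfrac{3869247756486775922024264545}{1940405707787319054606925942}$, and then a single division with remainder to confirm that this number is less than $1.99405$. I expect the one genuinely delicate point to be the bookkeeping in the asymptotic expansion — verifying that the perturbations coming from $\epsilon_1$, from the $-1$ in $T+k-1$, and from the quadratic term of the geometric series all land strictly inside $O(k)$, so that the $k^{3/2}$-coefficient is \emph{exactly} $\tau+\tfrac1\tau-2\alpha^2\beta^2\delta\tau$; the positivity check and the closing rational computation are mechanical.
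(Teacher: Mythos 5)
Your proposal is correct and follows essentially the same route as the paper: drop $S(\cA,T)\ge 0$, feed the variance lower bound from the preceding claim into Theorem~\ref{thm:ETexact} using the monotonicity of $x\mapsto k^2T^2/x - T$, and expand asymptotically to read off the $k^{3/2}$-coefficient $\tau+\tfrac1\tau-2\alpha^2\beta^2\delta\tau$. The paper records this in a single displayed line; your version just makes explicit the positivity/monotonicity check, the handling of the standing normalization $a_k\le k^2-T$ (which the paper has already assumed), and the bookkeeping of the geometric-series expansion --- all of which match what the paper leaves implicit.
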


\begin{proof}[Proof of Claim:]
Theorem~\ref{thm:ETexact} gives (under the hypothesis that $y+\beta^2 x \ge \beta^2\delta$)
\[\diam(\cA)\ge \frac{k^2 T^2}{T(T+k-1)-2\alpha^2\tau^2\delta kT}-T
    = k^2 -\left(\tau +\frac{1}{\tau } -2 \alpha ^2\beta^2 \delta \tau\right)k^{3/2} + O(k).\]
With the values of $\tau,\alpha,\delta$ given on lines~\eqref{set tau},~\eqref{set alpha}, and~\eqref{set delta} above, this gives the claimed number.
\end{proof}

We now work under the hypothesis that $y+\beta^2 x \le \beta^2\delta$. Note that $|U_1|+|U_2|\le yT+xT \leq \left(\frac{y}{\beta^2}+x\right)\le \delta T<T$, so that necessarily $U_3$ is not empty. It can happen that $U_4,U_5$ are empty.

Let $X-X$ be the set of positive differences of elements of $X$. In particular, if $X$ is a subset of a Sidon set, then $|X-X|=\binom{|X|}2$. We set
$$\calM := \cA \cap(\max U_3,a_k-\max U_3)$$
We will find that $|\cal M|\approx|\cA|$, but that if $x,y$ are small then $\calM-\calM$ is missing a substantial number of small integers, so that $S(\calM,T_2)$ is $\Omega(k^{5/2})$, which forces the diameter of $\calM$ to be large. This, in turn, forces $\diam(\cA)\ge 2\max U_3+\diam(\calM)$ to be large, and this gives a bound on $b_\infty$.

\begin{claim}\label{claim:M}
    Suppose that $y+\beta^2x\le \beta^2\delta$.
Then $|\calM| \ge k-2(1+\alpha\beta)\tau\sqrt k + O(1)$.
\end{claim}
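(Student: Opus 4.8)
The plan is to bound $|\calM|$ from below by controlling how many elements of $\cA$ lie in the two end-segments $[0,\max U_3]$ and $[a_k-\max U_3,a_k]$ that we delete. By definition $\calM = \cA\cap(\max U_3, a_k-\max U_3)$, so $|\calM| = k - A_{\max U_3+1} - A_{-(\max U_3)}$ (the first term counts elements of $\cA$ in $[\max U_3 +1-T,\max U_3+1)\cap[0,\infty)$, i.e. in the left window; the second counts elements in the right window near $a_k$), up to a harmless $O(1)$ coming from boundary conventions on which endpoints are included. Hence the task reduces to bounding $A_{\max U_3+1} + A_{-(\max U_3)}$ from above.

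First I would note that $\max U_3 \le T$, so both $A_{\max U_3+1}$ and $A_{-(\max U_3)}$ are defined in terms of the monotone regime $1\le j\le T$. Write $j^\star := \max U_3$. Since $j^\star\in U_3$, the defining inequality of $U_3$ gives $\tfrac{A_{j^\star}+A_{-j^\star}}{2}\le (1+\alpha\beta)\bar A$. Because $(A_j+A_{-j})/2$ is nondecreasing on $[1,T]$ and changes by at most $1$ when $j$ increments by $1$, we also have $\tfrac{A_{j^\star+1}+A_{-(j^\star+1)}}{2}\le (1+\alpha\beta)\bar A + 1$. Combining the two (and absorbing the $+1$ and the mismatch between $A_{-j^\star}$ and $A_{-(j^\star+1)}$ into $O(1)$), we get $A_{j^\star+1}+A_{-j^\star} \le 2(1+\alpha\beta)\bar A + O(1)$. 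Finally, by the normalization assumption $a_k\le k^2-T$ we have $\bar A = kT/(a_k+T) \le \tau\sqrt k + \epsilon_1/k = \tau\sqrt k + O(1/k)$, so $A_{j^\star+1}+A_{-j^\star}\le 2(1+\alpha\beta)\tau\sqrt k + O(1)$. Substituting into $|\calM| = k - A_{j^\star+1} - A_{-j^\star} + O(1)$ yields exactly $|\calM|\ge k - 2(1+\alpha\beta)\tau\sqrt k + O(1)$, as claimed. The hypothesis $y+\beta^2 x\le\beta^2\delta$ is not strictly needed for this particular bound, but it guarantees (as noted in the text) that $U_3\ne\emptyset$ so that $\max U_3$ is well-defined; I would mention this for cleanliness.

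The only mild subtlety — and the step I would be most careful about — is the bookkeeping of the window endpoints: translating "delete $\cA\cap[0,j^\star]$ and $\cA\cap[a_k-j^\star,a_k]$" into the language of the $A_i$ counters, which are sums over half-open intervals of width $T$. One must check that $j^\star \le T$ (so the relevant window $[j^\star+1-T,\,j^\star+1)$ actually has its left endpoint at or below $0$, hence captures all of $\cA\cap[0,j^\star]$ and nothing above $j^\star$) — this holds since $U_3\subseteq[1,T]$. Similarly on the right, using the reflected counter $A_{-j}$. Provided $j^\star\le T$, each deleted end-segment is captured exactly by one $A$-value, and the argument above goes through with only $O(1)$ slack from the choice of which endpoints are included in $\calM$ versus the half-open windows. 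Everything else is the two-line monotonicity/Lipschitz estimate on $(A_j+A_{-j})/2$ together with the bound $\bar A\le\tau\sqrt k+O(1/k)$.
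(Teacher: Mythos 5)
Your proposal is correct and follows essentially the same route as the paper: identify $z=\max U_3$ (well-defined since the hypothesis forces $U_3\neq\emptyset$), invoke the defining inequality $\tfrac{A_z+A_{-z}}{2}\le (1+\alpha\beta)\bar A$, translate $|\calM|\ge k-(A_z+A_{-z})-O(1)$, and substitute $\bar A=\tau\sqrt k+O(1)$. The paper compresses the endpoint bookkeeping into a single "$-2$", while you spell it out, but the underlying argument is identical.
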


\begin{proof}[Proof of Claim:]
    As $y+\beta^2 x \le \beta^2\delta$, we know that $U_3$ is nonempty. Let $z=\max U_3$. By the definition of $U_3$, we know that $\frac{A_{z}+A_{-z}}{2}\leq (1+\alpha\beta)\bar A$. Therefore,
    $$|\calM| \ge k - 2(1+\alpha\beta)\bar A -2=k-2(1+\alpha\beta)\tau\sqrt k +O(1).$$
\end{proof}

We set $L_i:= \cA\cap(U_i-1)$ and $R_i:= \cA \cap (a_k+T+1-U_i)$. Note that $L_1,L_2,R_1,R_2$ are disjoint, and disjoint from $\calM$. But they are all subsets of the Sidon set $\cA$, so any difference in $(L_1\cup L_2)-(L_1\cup L_2)$ or $(R_1\cup R_2)-(R_1\cup R_2)$ is necessarily missing from $\calM-\calM$, and so contributes to $S(\calM,T_2)$ (provided $T_2$ is large enough).

\begin{claim}\label{claim:differences}
    Suppose that $y+\beta^2x\le \beta^2\delta$.

    There are at least $(1-\alpha)^2\tau^2 k+O(\sqrt k)$ elements of $(L_1-L_1) \cup (R_1-R_1)$, each at most $yT$.

    There are at least $\alpha^2(1-\beta)^2\tau^2 k+O(\sqrt k)$ elements of $(L_2-L_2) \cup (R_2-R_2)$, each at most $xT$.

    There are at least $(1-\alpha\beta)^2\tau^2k+O(\sqrt k)$ elements of $\big((L_1\cup L_2)-(L_1\cup L_2)\big) \cup \big((R_1\cup R_2)-(R_1\cup R_2)\big)$, each at most $(x+y)T$.
\end{claim}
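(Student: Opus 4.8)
The plan is to realize $L_1,R_1$, the pair $L_2,R_2$, and the pair $L_1\cup L_2,R_1\cup R_2$ as disjoint subblocks of the Sidon set $\cA$ sitting in short intervals at the two ends of $[0,a_k]$, to read their sizes off the boundary values of the partition $U_1,\dots,U_5$, and then to count the differences inside these blocks by combining the Sidon property with the convexity of $t\mapsto\binom t2$.

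First I would pin down the geometry. Since $(A_j+A_{-j})/2$ is nondecreasing on $[1,T]$, the $U_i$ are consecutive integer subintervals, with $U_1=[1,|U_1|]$ and $U_2=[|U_1|+1,|U_1|+|U_2|]$, and $|U_1|+|U_2|\le(y/\beta^2+x)T\le\delta T<T$. For such $j$ the window $[j-T,j)$ meets $\cA$ only in $[0,j-1]$, so $A_j=|\cA\cap[0,j-1]|$, and symmetrically $A_{-j}=|\cA\cap[a_k-j+1,a_k]|$; hence from the definitions $|L_1|=A_{|U_1|}$, $|R_1|=A_{-|U_1|}$, and (because $L_1,L_2$ and likewise $R_1,R_2$ occupy adjacent intervals) $|L_2|+|R_2|=\big(A_{|U_1|+|U_2|}+A_{-(|U_1|+|U_2|)}\big)-\big(A_{|U_1|}+A_{-|U_1|}\big)$ and $|L_1\cup L_2|+|R_1\cup R_2|=A_{|U_1|+|U_2|}+A_{-(|U_1|+|U_2|)}$. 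Turning these into $\sqrt k$-estimates: because $(A_j+A_{-j})/2$ rises by at most $1$ per unit step, at the right endpoint of $U_1$ it is $\ge(1-\alpha)\bar A-1$ and at the right endpoint of $U_2$ it is $\ge(1-\alpha\beta)\bar A-1$; combining with the defining upper bounds at those endpoints and with $\bar A\ge\tau\sqrt k$ gives
\[|L_1|+|R_1|\ge 2(1-\alpha)\tau\sqrt k+O(1),\quad |L_2|+|R_2|\ge 2\alpha(1-\beta)\tau\sqrt k+O(1),\]
and also $|L_1\cup L_2|+|R_1\cup R_2|\ge 2(1-\alpha\beta)\tau\sqrt k+O(1)$.

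The counting step is the heart of it. For disjoint $X,X'\subseteq\cA$ all $\binom{|X|}2$ positive differences of $X$ are distinct (Sidon), and $(X-X)\cap(X'-X')=\emptyset$, since a common difference $d$ would give $x_1-x_2=x_1'-x_2'$ with $x_i\in X$, $x_i'\in X'$, hence $x_1+x_2'=x_1'+x_2$, and the Sidon property forces $\{x_1,x_2'\}=\{x_1',x_2\}$; as $x_1>x_2$ this means $x_1=x_1'$, contradicting $X\cap X'=\emptyset$. Therefore $|(X-X)\cup(X'-X')|=\binom{|X|}2+\binom{|X'|}2\ge 2\binom{(|X|+|X'|)/2}2$, the last step by convexity of $t\mapsto t(t-1)/2$. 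Applying this to $(L_1,R_1)$ — disjoint for large $k$, since the two end-intervals have length $O(k^{3/2})$ while $a_k\gg k^{3/2}$ — and inserting the first of the estimates above shows $(L_1-L_1)\cup(R_1-R_1)$ has at least $(1-\alpha)^2\tau^2 k+O(\sqrt k)$ elements, each in $[1,|U_1|-1]\subseteq[1,yT]$ as $u_1\le y$. Running the same argument for $(L_2,R_2)$ and for $(L_1\cup L_2,R_1\cup R_2)$ gives the remaining two assertions, with the differences bounded by $|U_2|-1\le xT$ and $|U_1|+|U_2|-1\le(x+y)T$ respectively.

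I expect the difficulty to be bookkeeping rather than any genuine obstacle: correctly pairing the window indices $|U_1|$ and $|U_1|+|U_2|$ with the appropriate inequalities among the five defining $U_1,\dots,U_5$, absorbing both the ``at most $1$ per step'' slack and the negligible gap between $\bar A$ and $\tau\sqrt k$ into the $O(\sqrt k)$ terms, and checking $L_i\cap R_i=\emptyset$ once $k$ is large — which holds because $|U_1|+|U_2|\le\delta T=O(k^{3/2})$ whereas $a_k=\diam(\cA)\ge s_k$ is of order $k^2$.
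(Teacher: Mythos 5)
Your proposal is correct and follows essentially the same route as the paper's own proof: read off $|L_1|, |R_1|$ (etc.) from the boundary values $A_{\max U_i}, A_{-\max U_i}$, use the ``rises by at most one per step'' property together with the defining thresholds to pin down $|L_1\cup R_1|$ and its analogues to within $O(1)$ of $2(1-\alpha)\bar A$ etc., then use the Sidon property and convexity of $\binom t2$ to count differences, bounded above by $\diam(U_1)$ etc. The one place you go beyond the paper's text is in spelling out why $(L_1-L_1)$ and $(R_1-R_1)$ are disjoint (the $x_1+x_2'=x_1'+x_2$ Sidon argument), which the paper takes as understood when it writes $|(L_1-L_1)\cup(R_1-R_1)|=\binom{|L_1|}2+\binom{|R_1|}2$; making that explicit is a worthwhile addition but not a different method.
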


\begin{proof}[Proof of Claim:]
We note that
$|L_1| =A_{\max U_1}, |R_1|=A_{-\max U_1},$ and the definition of $U_1$ (together with the nonemptiness of $U_2$, which is implied by $\frac{y}{\beta^2}+x \le \delta$) gives
$$A_{\max U_1}+A_{-\max U_1}\leq 2(1-\alpha)\bar A< A_{1+\max U_1}+A_{-1-\max U_1}\le A_{\max U_1}+A_{-\max U_1}+2,$$
and so
$$ 2(1-\alpha)\bar A-2 \le |L_1 \cup R_1| \le 2(1-\alpha)\bar A.$$
Similarly,
$$|L_2\cup R_2| = 2\alpha(1-\beta)\bar A +O(1),$$
$$|L_1\cup L_2 \cup R_2 \cup R_1| = 2(1-\alpha \beta)\bar A+O(1).$$

For a Sidon set $X$, the size of $X-X$ is exactly $\binom{|X|}{2}$. For example, $(L_1-L_1)\cup(R_1-R_1)$ has
$$\binom{|L_1|}2+\binom{|R_1|}2 \ge 2\binom{(1-\alpha)\bar A}2 = ((1-\alpha)\bar A)^2+O(\sqrt k)=(1-\alpha)^2\tau^2 k+O(\sqrt k)$$
elements, each at most $\diam( U_1)=u_1 T$, which is at most $y T$.

Essentially identical computations give the other assertions of this claim.
\end{proof}

We set $T_2:=\tau_2T+\epsilon_2$, with $\epsilon_2\in[0,1)$ chosen to make $T_2$ an integer.

\begin{claim}\label{claim:S}
    Suppose that $y+\beta^2x\le \beta^2\delta$ and $\tau_2>x+y$. Then $S(\calM,T_2)$ is at least    $kT\cdot \tau^2\cdot w$, where
    \begin{align}\label{eq:w}
        w&:=\tau_2(1-\alpha\beta)^2 - y (1-\alpha ) (1 +\alpha-2 \alpha  \beta) - x \alpha  (1-\beta ) (2-\alpha-\alpha\beta)\\
        &= \frac{30590263131}{179748900463}-\frac{6622929}{9056729}y-\frac{5081160 }{27794789}x
    \end{align}
\end{claim}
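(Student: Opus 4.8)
The plan is to convert the three families of ``missing'' small differences supplied by Claim~\ref{claim:differences} into a lower bound on the weighted count $S(\calM,T_2)=\sum_{r\not\in\calM-\calM,\,1\le r\le T_2-1}(T_2-r)$. Write $D_1:=(L_1-L_1)\cup(R_1-R_1)$, $D_2:=(L_2-L_2)\cup(R_2-R_2)$, and $D_{12}:=\big((L_1\cup L_2)-(L_1\cup L_2)\big)\cup\big((R_1\cup R_2)-(R_1\cup R_2)\big)$. First I would observe that every element of $D_{12}$ is a positive integer not belonging to $\calM-\calM$: such a difference is realized by a pair of elements of $\cA$ lying inside one of $L_1,L_2,R_1,R_2$, hence inside a set disjoint from $\calM$, and the Sidon property forbids that difference from being realized by any other pair of elements of $\cA$, in particular by any pair inside $\calM$. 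I would also record two structural facts that the Sidon/disjointness setup makes immediate: $D_1\cup D_2\subseteq D_{12}$, and $D_1\cap D_2=\emptyset$ (a common difference would be realized by two distinct pairs of $\cA$, contradicting Sidon, since $L_1,L_2,R_1,R_2$ are pairwise disjoint). Finally, since every element of $D_{12}$ is at most $(x+y)T$ by Claim~\ref{claim:differences}, and $(x+y)T<\tau_2 T\le T_2$ by the hypothesis $\tau_2>x+y$, every element of $D_{12}$ lies in $[1,T_2-1]$ for $k$ large, so $S(\calM,T_2)\ge\sum_{r\in D_{12}}(T_2-r)$.

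The crucial step is not to bound $T_2-r$ crudely by $T_2-(x+y)T$ on all of $D_{12}$ — that would only yield the coefficient $(1-\alpha\beta)^2(\tau_2-x-y)$, which is too small — but to split $T_2-r=(T_2-(x+y)T)+\big((x+y)T-r\big)$ and exploit that $D_1$ and $D_2$ are crammed into the much shorter windows $[1,yT]$ and $[1,xT]$. The first piece contributes $|D_{12}|\cdot(T_2-(x+y)T)$. For the second piece, $(x+y)T-r\ge 0$ for all $r\in D_{12}$, while $(x+y)T-r\ge xT$ for $r\in D_1$ and $(x+y)T-r\ge yT$ for $r\in D_2$; since $D_1\sqcup D_2\subseteq D_{12}$, this gives $\sum_{r\in D_{12}}\big((x+y)T-r\big)\ge xT\,|D_1|+yT\,|D_2|$. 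Substituting the cardinalities $|D_{12}|\ge(1-\alpha\beta)^2\tau^2 k+O(\sqrt k)$, $|D_1|\ge(1-\alpha)^2\tau^2 k+O(\sqrt k)$, $|D_2|\ge\alpha^2(1-\beta)^2\tau^2 k+O(\sqrt k)$ from Claim~\ref{claim:differences} — legitimate since the multipliers $T_2-(x+y)T$ (positive by hypothesis), $xT$, and $yT$ are all nonnegative — and using $T=\tau k^{3/2}+O(1)$, $T_2=\tau_2 T+O(1)$, I would arrive at
\[ S(\calM,T_2)\ \ge\ \Big[(1-\alpha\beta)^2(\tau_2-x-y)+(1-\alpha)^2 x+\alpha^2(1-\beta)^2 y\Big]\,\tau^2 kT+O(k^2).\]

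To finish, I would verify the algebraic identity that the bracket equals $w$: with $p:=1-\alpha$ and $q:=\alpha(1-\beta)$ one has $p+q=1-\alpha\beta$, so the bracket is $(p+q)^2(\tau_2-x-y)+p^2x+q^2y=\tau_2(p+q)^2-q(2p+q)x-p(p+2q)y$, and $2p+q=2-\alpha-\alpha\beta$, $p+2q=1+\alpha-2\alpha\beta$ turn this into the stated formula for $w$; plugging in the chosen $\tau_2,\alpha,\beta$ gives the rational form on line~\eqref{eq:w}. Since $kT=\tau k^{5/2}$ dominates the $O(k^2)$ error, this is exactly the asserted bound $S(\calM,T_2)\ge kT\cdot\tau^2\cdot w$ in the same asymptotic sense used throughout this section. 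The main obstacle is conceptual rather than computational: one must resist the crude estimate and instead notice that the ``inner'' difference sets $D_1,D_2$ — which by the Sidon property are disjoint subsets of $D_{12}$ — are confined to short windows, and feed that extra structure through the split of $T_2-r$; once that is set up, the remainder is bookkeeping with lower-order error terms and the routine (if lengthy) verification of the numerical value of $w$.
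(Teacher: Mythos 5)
Your proof is correct and uses essentially the same accounting as the paper: the paper partitions $D_{12}$ into $D_1$, $D_2$, and $D_{12}\setminus(D_1\cup D_2)$ and weights each piece by $T_2$ minus the relevant diameter bound ($yT$, $xT$, $(x+y)T$ respectively), while you instead decompose the weight $T_2-r=(T_2-(x+y)T)+((x+y)T-r)$ and then localize the second summand to $D_1\sqcup D_2$. After expansion the two computations give literally the same lower bound $\tau_2(p+q)^2 - y\,p(p+2q) - x\,q(q+2p)$ with $p=1-\alpha$, $q=\alpha(1-\beta)$, so this is a reorganization of the same proof rather than a genuinely different route.
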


\begin{proof}[Proof of Claim:]
By hypothesis, $T_2=\tau_2 T > (x+y)T$, so all of the differences described in Claim~\ref{claim:differences} contribute to
\begin{align*}
    S(\calM,T_2) & := \sum_{\substack{r=1\\r\in \calM-\calM}}^{T_2-1}(T_2-r) \\
    &\ge \sum_{r\in \big((L_1\cup L_2)-(L_1\cup L_2)\big) \cup \big((R_1\cup R_2)-(R_1\cup R_2)\big)} (T_2-r).
\end{align*}

For example, the $(1-\alpha)^2\tau^2 k$ differences in $(L_1-L_1)\cup(R_1-R_1)$ contribute at least
\begin{equation}\label{eq:U_1 contributes}
    (T_2-yT)\cdot \big((1-\alpha)^2\tau^2 k+O(\sqrt k)\big) =(\tau_2-y)(1-\alpha)^2\tau^2 kT + O(k^2)
\end{equation}
to $S(\calM,T_2)$. Similarly, the $$\alpha^2(1-\beta)^2\bar A^2+O(\sqrt k)$$ differences in $(L_2-L_2)\cup(R_2-R_2)$, each at most $\diam(U_2) =u_2T < xT$, contribute at least
\begin{equation}\label{eq:U_2 contributes}
    (T_2-\diam(U_2)) \cdot \big(\alpha^2(1-\beta)^2\bar A^2+O(\sqrt k)\big)
    =(\tau_2 - x)\alpha^2(1-\beta)^2\tau^2 kT + O(k^2)
\end{equation}
to $S(\calM,T_2)$. Finally, $(L_1\cup L_2)-(L_1\cup L_2)$ and $(R_1\cup R_2)-(R_1\cup R_2)$ have many differences that haven't been counted already. They have
$$((1-\alpha\beta)\bar A)^2+O(\sqrt k)=(1-\alpha\beta)^2\tau^2k+O(\sqrt k)$$
elements, but only
\begin{equation*}
    (1-\alpha\beta)^2\tau^2 k - (1-\alpha)^2\tau^2 k-\alpha^2(1-\beta)^2
    \tau^2 k+O(\sqrt k) \\
    = 2 \alpha (1-\alpha)(1-\beta)\tau^2k+O(\sqrt k),
\end{equation*}
each at most $\diam(U_1\cup U_2) = |U_1|+|U_2|-1 < (u_1+u_2)T\le (y+x)T$, have not been accounted for already. These contribute an additional
\begin{equation}\label{eq:U1U2 contribute together}
    (\tau_2-(y+x))\cdot 2 \alpha (1-\alpha)(1-\beta)\tau^2 k T + O(k^2)
\end{equation}
to $S(\calM, T_2)$. After some algebra to rearrange terms, this claim is confirmed.
\end{proof}

\begin{claim}\label{claim:bound from smalls}
Suppose that $y+\beta^2x \le \beta^2 \delta$. Then
    \begin{multline*}
        b_\infty \le \frac{\tau ^2 \left(2 \tau_2^2 \left(2 \alpha  \beta +\beta ^2 \delta +1\right)+2 (\alpha -1) \beta ^2 \delta  (\alpha  (2 \beta -1)-1)-2 \tau_2 (\alpha  \beta -1)^2+\tau_2^3\right)+\tau_2}{\tau  \tau_2^2}\\\le \frac{3869247756486775922024264545}{1940405707787319054606925942}\le 1.99405.
    \end{multline*}
\end{claim}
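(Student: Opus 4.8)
\section*{Proof proposal for Claim~\ref{claim:bound from smalls}}

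The plan is to run the Erd\H{o}s--Tur\'an Sidon Set Equation a second time, now on the subset $\calM$, and combine the resulting lower bound on $\diam(\calM)$ with the decomposition
$$\diam(\cA) = (\min\calM - a_1) + \diam(\calM) + (a_k - \max\calM) \ge 2\max U_3 + \diam(\calM),$$
which is valid because $\calM = \cA\cap(\max U_3,\,a_k-\max U_3)$, so $\min\calM> \max U_3$ and $a_k-\max\calM>\max U_3$.

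First I would collect the ingredients. Since $\calM\subseteq\cA$ is a Sidon set, Theorem~\ref{thm:ETexact} applied to $\calM$ with parameter $T_2$ expresses $\diam(\calM)$ as a quotient; as $\diam(\calM)\ge0$ its denominator $T_2(T_2+|\calM|-1)-\big(2S(\calM,T_2)+V(\calM,T_2)\big)$ is positive, so discarding $V(\calM,T_2)\ge0$ and inserting $2S(\calM,T_2)\ge 2kT\tau^2 w$ from Claim~\ref{claim:S} — whose hypothesis $\tau_2>x+y$ follows from $x+y\le\frac{y}{\beta^2}+x\le\delta<\tau_2$ — only shrinks a positive denominator, whence
$$\diam(\calM)\ge \frac{|\calM|^2 T_2^2}{T_2(T_2+|\calM|-1)-2kT\tau^2 w}-T_2.$$
Because $(A_j+A_{-j})/2$ is nondecreasing on $[1,T]$, the sets $U_1,\dots,U_5$ are consecutive subintervals, so $\max U_3 = T-|U_4|-|U_5|\ge(1-x-y)T$; and Claim~\ref{claim:M} gives $|\calM| = k-2(1+\alpha\beta)\tau\sqrt k+O(1)$.

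Next I would extract the coefficient of $k^{3/2}$. Writing $T=\tau k^{3/2}+O(1)$ and $T_2=\tau\tau_2 k^{3/2}+O(1)$, one checks the displayed quotient is increasing in $|\calM|$ for large $k$ (so the lower bound on $|\calM|$ may be substituted) and that its denominator stays positive; a routine series expansion then gives $\diam(\cA)\ge k^2-B(x,y)\,k^{3/2}+O(k)$, where $B$ is affine: $B(x,y)=c_0+P_y\,y+P_x\,x$ with $c_0=2\tau(1+2\alpha\beta)-2\tau(1-\alpha\beta)^2/\tau_2+\tau\tau_2+1/(\tau\tau_2)$, $P_y=2\tau+2\tau(1-\alpha)(1+\alpha-2\alpha\beta)/\tau_2^2>0$, and $P_x=2\tau+2\tau\alpha(1-\beta)(2-\alpha-\alpha\beta)/\tau_2^2>0$; here the $y,x$-terms arise from the explicit form of $w$ in Claim~\ref{claim:S} together with $\max U_3\ge(1-x-y)T$. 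Hence $b_\infty\le\sup B$ over the feasible triangle $\{x,y\ge0,\ y+\beta^2x\le\beta^2\delta\}$, a supremum attained at a vertex. Evaluating at $(x,y)=(0,\beta^2\delta)$, clearing denominators over $\tau\tau_2^2$, and using the identity $(1-\alpha)(1+\alpha-2\alpha\beta)=(\alpha-1)(\alpha(2\beta-1)-1)$ to match the stated shape reproduces exactly the displayed rational expression; substituting the values~\eqref{set tau}--\eqref{set tau2} is then an exact rational computation, giving the fraction, which is $<1.99405$.

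I expect the main obstacle to be twofold. First, the asymptotic bookkeeping: one must confirm that the $O(1)$ error in $|\calM|$, the $O(\sqrt k)$ errors in the difference counts of Claim~\ref{claim:differences}, and the resulting $O(k^2)$ error in $2S(\calM,T_2)$ all stay below the $k^{3/2}$ scale after being pushed through the quotient in Theorem~\ref{thm:ETexact} — the same flavor of ``book'' computation as in Theorem~\ref{thm:ErdosTuran2}, only messier. Second, the optimization and the arithmetic: one must check that with the chosen parameters $(0,\beta^2\delta)$ is the maximizing vertex, i.e.\ $\beta^2 P_y\ge P_x$, and then grind through the exact rational arithmetic, where the strange value~\eqref{set delta} of $\delta$ is precisely what makes the resulting bound equal to that of Claim~\ref{claim:bound from variance}, so that the two cases $y+\beta^2x\gtrless\beta^2\delta$ together yield the single number $\le 1.99405$. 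Conceptually nothing new is needed beyond the idea already displayed: in this case $\calM-\calM$ omits $\Omega(k)$ small integers, so $S(\calM,T_2)=\Omega(k^{5/2})$, and a second pass of the Erd\H{o}s--Tur\'an equation forces $\diam(\calM)$, hence $\diam(\cA)\ge 2\max U_3+\diam(\calM)$, to be large.
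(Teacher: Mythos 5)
Your proposal is correct and follows essentially the same route as the paper: decompose $\diam(\cA)\ge 2\max U_3+\diam(\calM)$, apply Theorem~\ref{thm:ETexact} to $\calM$ with parameter $T_2$ (dropping $V(\calM,T_2)\ge 0$ and inserting Claim~\ref{claim:S}), expand to extract the $k^{3/2}$ coefficient, observe it is affine in $(x,y)$ with positive coefficients, and maximize over the triangle at the vertex $(0,\beta^2\delta)$. Your explicit coefficients $c_0,P_y,P_x$ agree with what the paper obtains via its $\mu$-substitution, and your added checks (that $\tau_2>x+y$ so Claim~\ref{claim:S} applies, and that $\beta^2 P_y\ge P_x$ so the maximizing vertex really is $(0,\beta^2\delta)$) are sound small refinements the paper leaves implicit.
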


\begin{proof}
We begin with
    \begin{align*}
        \diam(\cA)&=\min\calM + \diam(\calM) + a_k-\max\calM \\
        &\ge 2(T-xT-yT) + \frac{|\calM|^2 T_2^2}{T_2(T_2+|\calM|-1)-2\tau^2 w kT}-T_2
    \end{align*}
where we have used Theorem~\ref{thm:ETexact} and Claim~\ref{claim:S}.
    The first term contributes $2\tau(1-x-y)$ to the $k^{3/2}$ coefficient, and the last term contributes $-\tau_2$. The middle term can be converted to a geometric series:
    \begin{align*}
        \frac{|\calM|^2 T_2^2}{T_2(T_2+|\calM|-1)-2\tau^2 w kT}
        &= \frac{|\calM|^2}{1+\frac{|\calM|-1}{\tau \tau_2k^{3/2}}-2\frac{\tau w}{\tau_2^2 k^{1/2}}}\\
        &= |\calM|^2\left(1+\left(\frac{2\tau w}{\tau_2^2 k^{1/2}}-\frac{|\calM|-1}{\tau\tau_2k^{3/2}}\right)+\left(\frac{2\tau w}{\tau_2^2 k^{1/2}}-\frac{|\calM|-1}{\tau\tau_2k^{3/2}}\right)^2+\cdots\right)\\
        &=|\calM|^2\left(1+\frac{2\tau w}{\tau_2^2 k^{1/2}}-\frac{|\calM|-1}{\tau\tau_2k^{3/2}}+O(1/k)\right)
    \end{align*}
Defining $\mu$ by $|\calM|=k-\mu\sqrt{k}$, we get
    \begin{align*}
        |\calM|^2\left(1+\frac{2\tau w}{\tau_2^2 k^{1/2}}-\frac{|\calM|-1}{\tau\tau_2k^{3/2}}+O(1/k)\right)
        &=(k-\mu\sqrt{k})^2\left(1+\frac{2\tau w}{\tau_2^2 k^{1/2}}-\frac{k-\mu\sqrt k}{\tau\tau_2k^{3/2}}\right) + O(k) \\
        &=k^2+ \left(\frac{2 \tau  w}{\tau_2^2} -2 \mu -\frac{1}{\tau  \tau_2} \right)k^{3/2}+O(k).
    \end{align*}

Thus,
    \begin{equation}\label{eq:binfty 2}
        b_\infty \leq \tau \tau_2-2\tau(1-x-y)+2\mu+\frac{1}{\tau\tau_2}- \frac{2 \tau  w}{\tau_2^2}.
    \end{equation}
    From Claim~\ref{claim:M}, we know that $\mu$ is at most $2(1+\alpha \beta)\tau$. The right side of~\eqref{eq:binfty 2} is a linear expression in $x$ and $y$, which we must maximize in the region $x\ge0,y\ge0, y+\beta^2 x \le \beta^2\delta$. The maximum occurs\footnote{Ideally, we would set
$$\tau_2=\sqrt{\frac{-2 \alpha ^2 \beta ^3+2 \alpha ^2 \beta ^2-\alpha ^2+2 \alpha  \beta ^3-2 \alpha  \beta +2 \alpha -\beta ^2}{\beta ^2-1}},$$
and then this expression has a factor of $y+\beta^2 x$, which we could replace with $\beta^2\delta$. The author didn't realize this until after the optimization work was complete, and including it would make some intermediate optimization steps more delicate.} at the vertex $(x,y)=(0,\beta^2\delta)$, and is
    $$b_\infty \le \frac{3869247756486775922024264545}{1940405707787319054606925942}.$$
\end{proof}

Claim~\ref{claim:bound from variance} and Claim~\ref{claim:bound from smalls} complete the proof of Theorem~\ref{thm:Main}. The value of $\delta$ was chosen so as to make the two bounds exactly equal. In general, one sets
$$\delta = -\frac{\tau_2 \left(-2 \alpha ^2 \beta ^2 \tau ^2+4 \alpha  \beta  \tau ^2 \tau_2+4 \alpha  \beta  \tau ^2+\tau ^2 \tau_2^2+\tau ^2 \tau_2-2 \tau ^2-\tau_2+1\right)}{2 \tau ^2 \left(\alpha ^2 \beta ^2 \tau_2^2+\alpha ^2 \beta ^2-\alpha ^2-2 \alpha  \beta +2 \alpha +\tau_2^2\right)}$$
to accomplish this.

In reality, the bounds were first worked out symbolically using Mathematica, and then optimized using simulated annealing. Having good estimates of where the optimal value occurs then allowed certain streamlining of the argument.

\section*{Acknowledgements}
I wish to thank Melvyn B. Nathanson and Yin Choi Cheng, who independently showed great patience while I explained early versions of this work. 

The author is supported by a PSC–CUNY Award, jointly funded by The Professional Staff Congress and The City University of New York.

\begin{bibdiv}
    \begin{biblist}
        \bib{BFR}{article}{
          author={Balogh, J\'ozsef},
          author={F\"uredi, Zolt\'an},
          author={Roy, Souktik},
          title={An upper bound on the size of Sidon sets},
          date={2021-06-17},
          eprint={https://arxiv.org/abs/2103.15850},
          pages={14},
        }

\bib{1942.Bose}{article}{
   author={Bose, R. C.},
   title={An affine analogue of Singer's theorem},
   journal={J. Indian Math. Soc. (N.S.)},
   volume={6},
   date={1942},
   pages={1--15},
   issn={0019-5839},
   review={\MR{6735}},
 }
 \bib{Distnet}{webpage}{
   title={Distributed.net},
   author={},
   accessdate={2022-07-15},
   date={2014-02-25},
   note={OGR-28 began in 2014 and is expected to finish in 2023.},
   subtitle={The OGR-27 project has been completed},
   url={https://blogs.distributed.net/2014/02/},
 }
\bib{Cilleruelo}{article}{
   author={Cilleruelo, Javier},
   title={Sidon sets in $\Bbb N^d$},
   journal={J. Combin. Theory Ser. A},
   volume={117},
   date={2010},
   number={7},
   pages={857--871},
   issn={0097-3165},
   review={\MR{2652099}},
   doi={10.1016/j.jcta.2009.12.003},
 }
 
 \bib{DR}{article}{
   title={Larger Golomb Rulers},
   author={Dogon, Gil},
   author={Rokicki, Tomas},
   conference={
     title={Gathering 4 Gardner 12},
     date={2016-04-01},
     address={Atlanta, Georgia},
   },
   book={
     title={G4G12 Exchange Book},
     subtitle={Art, Games, Magic, \& Math},
     volume={1},
     date={2017},
   },
   pages={155--166},
   note={Data available at \url{http://cube20.org/golomb/}},
   url={https://www.gathering4gardner.org/g4g12gift/Rokicki_Dogon-Larger_Golomb_Rulers.pdf},
 }

 \bib{MR3498632}{article}{
   author={Dudek, Adrian W.},
   author={Greni\'{e}, Lo\"{\i}c},
   author={Molteni, Giuseppe},
   title={Primes in explicit short intervals on RH},
   journal={Int. J. Number Theory},
   volume={12},
   date={2016},
   number={5},
   pages={1391--1407},
   issn={1793-0421},
   review={\MR{3498632}},
   doi={10.1142/S1793042116500858},
   eprint={arXiv:1503.05403}
}
\bib{Eberhard}{article}{
   author={Eberhard, Sean},
   author={Manners, Freddie},
   title={The apparent structure of dense Sidon sets},
   note={arXiv:2107.05744},
   pages={17},
 }

\bib{ET}{article}{
   author={Erd\"{o}s, P.},
   author={Tur\'{a}n, P.},
   title={On a problem of Sidon in additive number theory, and on some
   related problems},
   journal={J. London Math. Soc.},
   volume={16},
   date={1941},
   pages={212--215},
   issn={0024-6107},
   review={\MR{6197}},
   doi={10.1112/jlms/s1-16.4.212},
}
\bib{Lindstrom}{article}{
   author={Lindstr\"{o}m, Bernt},
   title={An inequality for $B\sb{2}$-sequences},
   journal={J. Combinatorial Theory},
   volume={6},
   date={1969},
   pages={211--212},
   issn={0021-9800},
   review={\MR{236138}},
 }
 \bib{mybib}{article}{
   author={O'Bryant, Kevin},
   title={A complete annotated bibliography of work related to Sidon
   sequences},
   journal={Electron. J. Combin.},
   volume={DS11},
   date={2004},
   number={Dynamic Surveys},
   pages={39},
   review={\MR{4336213}},
 }
 \bib{Ruzsa}{article}{
   author={Ruzsa, Imre Z.},
   title={Solving a linear equation in a set of integers. I},
   journal={Acta Arith.},
   volume={65},
   date={1993},
   number={3},
   pages={259--282},
   issn={0065-1036},
   review={\MR{1254961}},
   doi={10.4064/aa-65-3-259-282},
}
 \bib{Shearer}{webpage}{
   author={Shearer,James},
   title={This webpage is devoted to Golomb rulers},
   accessdate={2022-07-15},
   date={1999-02-22},
   note={Only available through the Wayback Machine.},
   url={https://web.archive.org/web/20171225101048/http://www.research.ibm.com/people/s/shearer/grule.html},
}

%---------------------------------------------------------------------------------------
\bib{1932.Sidon}{article}{
    author={Sidon, S.},
     title={Ein Satz \"{u}ber trigonomietrische Polynome und seine Anwendungen in der Theorie der Fourier-Reihen},
      date={1932},
   journal={Math. Annalen},
    volume={106},
     pages={536\ndash 539},
}

%---------------------------------------------------------------------------------------
\bib{1938.Singer}{article}{
    author={Singer, James},
     title={A theorem in finite projective geometry and some applications to number theory},
      date={1938},
      ISSN={0002-9947},
   journal={Trans. Amer. Math. Soc.},
    volume={43},
    number={3},
     pages={377\ndash 385},
    review={\MR{1501951}},
}

    \end{biblist}
\end{bibdiv}

\end{document}